\theoremstyle{plain}
\newtheorem{thm}{Theorem}[section]
\newtheorem*{thm*}{Theorem}
\newtheorem{prop}[thm]{Proposition}
\newtheorem{lem}[thm]{Lemma}
\newtheorem{cor}[thm]{Corollary}
\theoremstyle{definition}
\newtheorem{defn}[thm]{Definition}
\theoremstyle{remark}
\newtheorem{rem}[thm]{Remark}
\newcommand{\vol}{\operatorname{vol}}
\newcommand{\ric}{\operatorname{Ric}}
\newcommand{\Hess}{\operatorname{Hess}}
\newcommand{\tr}{\operatorname{trace}}
\newcommand{\cut}{\mathrm{Cut}\,}
\newcommand{\supp}{\mathrm{supp}\,}
\newcommand{\CD}{\operatorname{CD}}
\title[One dimensional weighted Ricci curvature]{One dimensional weighted Ricci curvature and displacement convexity of entropies}
\author{Yohei Sakurai}
\address{Advanced Institute for Materials Research, Tohoku University, 2-1-1 Katahira, Aoba-ku, Sendai, 980-8577, Japan}
\email{yohei.sakurai.e2@tohoku.ac.jp}
\subjclass[2010]{Primary 53C21; Secondly 49Q20}
\keywords{Weighted Ricci curvature, Optimal transport theory}
\date{May 25, 2020}
\begin{document}
\maketitle

\begin{abstract}
In the present paper,
we prove that
a lower bound on the $1$-weighted Ricci curvature is equivalent to a convexity of entropies on the Wasserstein space.
Based on such characterization,
we provide some interpolation inequalities such as the Pr\'ekopa-Leindler inequality,
the Borel-Branscamp-Lieb inequality,
and the Brunn-Minkowski inequality under the curvature bound.
\end{abstract}

\section{Introduction}\label{sec:Introduction}
The aim of this article is to characterize a lower bound on the $1$-weighted Ricci curvature in terms of a convexity of entropies on the Wasserstein space.
Such characterization enables us to produce various interpolation inequalities under the lower bound on the $1$-weighted Ricci curvature.

For $n\geq 2$,
let $(M,d,m)$ be an $n$-dimensional weighted Riemannian manifold,
namely,
$M=(M,g)$ is an $n$-dimensional complete Riemannian manifold (without boundary),
$d$ is the Riemannian distance on $M$,
and $m:=e^{-f}\,\vol$ for some $f\in C^{\infty}(M)$,
where $\vol$ denotes the Riemannian volume measure on $M$.
For $N\in (-\infty,\infty]$,
the \textit{$N$-weighted Ricci curvature} $\ric^{N}_{f}$ is defined as follows (\cite{Ba}, \cite{BE}, \cite{L}, \cite{Q}):
If $N \in (-\infty,\infty)\setminus \{n\}$,
then
\begin{equation*}\label{eq:def of weighted Ricci curvature}
\ric^{N}_{f}:=\ric_{g}+\Hess f-\frac{d f \otimes d f}{N-n},
\end{equation*}
where $\ric_{g}$ is the Ricci curvature determined by $g$,
and $d f$ and $\Hess f$ are the differential and the Hessian of $f$,
respectively;
otherwise,
if $N=\infty$,
then $\ric^{N}_{f}:=\ric_{g}+\Hess f$;
if $N=n$,
and if $f$ is constant,
then $\ric^{N}_{f}:=\ric_{g}$;
if $N=n$,
and if $f$ is not constant,
then $\ric^{N}_{f}:=-\infty$.
For $\mathcal{F}:M\to \mathbb{R}$,
we mean by $\ric^{N}_{f}\geq \mathcal{F}$
for every $x \in M$,
and for every unit tangent vector $v$ at $x$
we have $\ric^{N}_{f}(v)\geq \mathcal{F}(x)$.
Traditionally,
the parameter $N$ has been chosen from $[n,\infty]$,
and in that case,
we have already known many geometric and analytic properties (see e.g., \cite{Lo}, \cite{LV1}, \cite{LV2}, \cite{Q}, \cite{St1}, \cite{St2}, \cite{St3}, \cite{WW}).
On the other hand,
very recently,
in the complementary case of $N\in (-\infty,n)$,
various properties have begun to be studied beyond the traditional case (see e.g., \cite{K}, \cite{KM}, \cite{Mi}, \cite{O2}, \cite{O3}, \cite{OT1}, \cite{OT2}, \cite{S}, \cite{W}, \cite{WY}).

It is well-known that
lower bounds on the $N$-weighted Ricci curvature can be characterized by convexities of entropies on the Wasserstein space via optimal transport theory.
We consider a curvature condition
\begin{equation}\label{eq:constant Ricci curvature assumption}
\ric^{N}_{f} \geq K
\end{equation}
for $K\in \mathbb{R}$.
In the traditional case of $N\in [n,\infty]$,
the characterization of the curvature condition (\ref{eq:constant Ricci curvature assumption}) is due to von Renesse and Sturm \cite{RS}, and Sturm \cite{St1} for $N=\infty$,
and Sturm \cite{St2}, \cite{St3}, and Lott and Villani \cite{LV1}, \cite{LV2} for $N\in [n,\infty]$.
Based on such characterization results,
for general metric measure spaces,
Sturm \cite{St2}, \cite{St3}, and Lott and Villani \cite{LV1}, \cite{LV2} have independently introduced the so-called \textit{curvature-dimension-condition} $\CD(K,N)$ for $K\in \mathbb{R}$ and $N\in [1,\infty]$.
The curvature-dimension condition $\CD(K,N)$ is equivalent to the condition (\ref{eq:constant Ricci curvature assumption}) when $N\in [n,\infty]$ on weighted Riemannian manifolds,
and it has been studied from various perspectives.

In the complementary case of $N\in (-\infty,n)$,
Ohta \cite{O2} has recently characterized the condition (\ref{eq:constant Ricci curvature assumption}) for $N\in (-\infty,0)$.
Based on the characterization,
he further formulated the curvature-dimension condition $\CD(K,N)$ for $K\in \mathbb{R}$ and $N\in (-\infty,0)$ (see also earlier works done by Ohta and Takatsu \cite{OT1}, \cite{OT2}).
Ohta \cite{O3} has also extended this program to the case of $N=0$.

Now,
we are concerned with the characterization problem of lower bounds on the $N$-weighted Ricci curvature in the remaining case of $N\in (0,n)$.
We focus on the case of $N=1$,
especially a condition
\begin{equation}\label{eq:Ricci curvature assumption}
\ric^{1}_{f}\geq (n-1)\, \kappa\, e^{\frac{-4f}{n-1}}
\end{equation}
for $\kappa \in \mathbb{R}$ introduced by Wylie and Yeroshkin \cite{WY} from the view point of the study of weighted connection.
They have observed that
the curvature condition (\ref{eq:Ricci curvature assumption}) is equivalent to a lower Ricci curvature bound by $(n-1)\kappa$ with respect to weighted connection (see Proposition \ref{prop:interpretation of curvature condition} below).
They further established comparison geometry under the condition (\ref{eq:Ricci curvature assumption}) (more precisely, see Subsection \ref{sec:Geometric analysis}).
In this paper,
inspired by the interpretation of the curvature condition (\ref{eq:Ricci curvature assumption}) via weighted connection,
we will prove that
the condition (\ref{eq:Ricci curvature assumption}) can be characterized by a convexity of entropies on the Wasserstein space.
Based on the equivalence,
we conclude interpolation inequalities such as the Pr\'ekopa-Leindler inequality,
the Borel-Branscamp-Lieb inequality,
and the Brunn-Minkowski inequality.

\subsection{Main result}
To state our main result,
we introduce a convexity property of entropies on the Wasserstein space (see Subsection \ref{sec:Optimal transport theory} for basics of optimal transport theory, and precise definition of the Wasserstein space).
Hereafter,
let $(M,d,m)$ be an $n$-dimensional weighted Riemannian manifold,
where $m=e^{-f} \,\vol$ for some $f\in C^{\infty}(M)$.

Let $P_{2}(M)$ denote the set of all Borel probability measures $\mu$ on $M$ satisfying
\begin{equation*}
\int_{M}\,d(x,x_{0})^{2}\,d\mu(x)<\infty
\end{equation*}
for some point $x_{0} \in M$,
which admits a metric called \textit{$L^{2}$-Wasserstein distance function} $W_{2}$ (see Subsection \ref{sec:Optimal transport theory} for its precise definition).
The metric space $(P_{2}(M),W_{2})$ called \textit{$L^{2}$-Wasserstein space} is one of the most fundamental objects in optimal transport theory.
Let $\mathcal{DC}$ stand for the set of all continuous convex functions $U:[0,\infty)\to \mathbb{R}$ with $U(0)=0$ such that
a function $\varphi_{U}:(0,\infty)\to \mathbb{R}$ defined by $\varphi_{U}(r):=r^{n}\,U(r^{-n})$ is convex.
For $U\in \mathcal{DC}$,
an entropy functional $U_{m}$ on $P_{2}(M)$ is defined by
\begin{equation}\label{eq:entropy}
U_{m}(\mu):=\int_{M}\,U(\rho)\,dm,
\end{equation}
where $\rho$ is the density of the absolutely continuous part in the Lebesgue decomposition of $\mu$ with respect to $m$.
For a function $H \in \mathcal{DC}$ defined by $H(r):=n\,r(1-r^{-\frac{1}{n}})$,
the entropy functional $H_{m}$ on $P_{2}(M)$ defined as (\ref{eq:entropy}) is called the \textit{R\'enyi entropy}.

In order to introduce our convexity property of entropies,
we need to define the twisted coefficient.
For $t \in [0,1]$,
we consider two lower semi-continuous functions $d_{f,t},\,d_{f}:M\times M \to \mathbb{R}$ defined by
\begin{equation*}\label{eq:weighted distance function}
d_{f,t}(x,y):=\inf_{\gamma} \int^{t\,d(x,y)}_{0}\,e^{\frac{-2f(\gamma(\xi))}{n-1}}\,d\xi,\quad d_{f}:=d_{f,1},
\end{equation*}
where the infimum is taken over all unit speed minimal geodesics $\gamma:[0,d(x,y)]\to M$ from $x$ to $y$.
The function $d_{f}$ has been called the \textit{re-parametrize distance} in \cite{WY} (cf. Subsection \ref{sec:Geometric analysis}).
In the unweighted case of $f=0$,
we have $d_{f,t}=t\,d$.
Notice that
for $t\in (0,1]$,
the function $d_{f,t}$ is not necessarily distance
since the triangle inequality does not hold in general.
We also remark that
for $t\in (0,1)$,
the function $d_{f,t}$ is not always symmetric.
For $\kappa \in \mathbb{R}$,
let $\mathfrak{s}_{\kappa}(t)$ be a unique solution of the Jacobi equation $\psi''(t)+\kappa\, \psi(t)=0$ with $\psi(0)=0,\,\psi'(0)=1$,
and let $C_{\kappa}$ be the diameter of the space form of constant curvature $\kappa$;
more explicitly,
they are written as
\begin{equation*}
\mathfrak{s}_{\kappa}(t)=\begin{cases}
                                                     \displaystyle \frac{\sin \sqrt{\kappa}t}{\sqrt{\kappa}} & \text{if $\kappa>0$}, \\
                                                                                t           & \text{if $\kappa=0$},\\
                                                     \displaystyle \frac{\sinh \sqrt{\vert\kappa \vert}t}{\sqrt{\vert \kappa \vert}}           & \text{if $\kappa<0$},
                                                   \end{cases}\quad 
C_{\kappa}=\begin{cases}
                                                     \displaystyle \frac{\pi}{\sqrt{\kappa}} & \text{if $\kappa>0$}, \\
                                                                                \infty           & \text{if $\kappa \leq 0$}.
                                                   \end{cases}
\end{equation*}
For $t\in (0,1)$ and $x,y\in M$ with $x\neq y$,
we define the \textit{twisted coefficient} $\beta_{\kappa,f,t}(x,y)$ as follows:
\begin{equation*}\label{eq:weighted distorsion coefficient}
\beta_{\kappa,f,t}(x,y):=\left(  \frac{\mathfrak{s}_{\kappa}( d_{f,t}(x,y))}{t\, \mathfrak{s}_{\kappa}(d_{f}(x,y))}  \right)^{n-1}
\end{equation*}
if $d_{f}(x,y)\in (0,C_{\kappa})$;
otherwise,
$\beta_{\kappa,f,t}(x,y):=\infty$.

\begin{rem}\label{rem:disjoint}
In the unweighted case of $f=0$,
we can define the twisted coefficient for $x=y$ as the limit $1$.
\end{rem}

Let $P^{ac}_{2}(M)$ denote the set of all Borel probability measures in $P_{2}(M)$ that are absolutely continuous with respect to $m$.
Let us introduce the following notion:
\begin{defn}\label{defi:twisted curvature bound}
For $\kappa \in \mathbb{R}$,
we say that
$(M,d,m)$ has \textit{$\kappa$-twisted curvature bound} if
for every disjointly supported pair $\mu_{0},\mu_{1} \in P^{ac}_{2}(M)$,
there are an optimal coupling $\pi$ of $(\mu_{0},\mu_{1})$,
and a minimal geodesic $(\mu_{t})_{t\in [0,1]}$ in the $L^{2}$-Wasserstein space from $\mu_{0}$ to $\mu_{1}$ such that
for all $U\in \mathcal{DC}$ and $t\in (0,1)$,
\begin{align}\label{eq:lower twisted curvature bound}
U_{m}(\mu_{t}) \leq (1-t)\,&\int_{M\times M}\,       U\left(  \frac{\rho_{0}(x)}{ \beta_{\kappa,f,1-t}(y,x) }  \right)  \frac{\beta_{\kappa,f,1-t}(y,x)}{\rho_{0}(x)}\, d\pi(x,y)\\
                                     +\:t\,&\int_{M\times M}\,       U\left(  \frac{\rho_{1}(y)}{\beta_{\kappa,f,t}(x,y)}  \right)    \frac{\beta_{\kappa,f,t}(x,y)}{\rho_{1}(y)}\, d\pi(x,y), \notag
\end{align}
where $\rho_{i}$ is the density of $\mu_{i}$ with respect to $m$ for each $i=0,1$.
\end{defn}

We also introduce the following weaker version:
\begin{defn}\label{defi:weak twisted curvature bound}
For $\kappa \in \mathbb{R}$,
we say that
$(M,d,m)$ has \textit{$\kappa$-relaxed twisted curvature bound} if
for every disjointly supported pair $\mu_{0},\mu_{1} \in P^{ac}_{2}(M)$,
there exist an optimal coupling $\pi$ of $(\mu_{0},\mu_{1})$,
and a minimal geodesic $(\mu_{t})_{t\in [0,1]}$ in the $L^{2}$-Wasserstein space from $\mu_{0}$ to $\mu_{1}$ such that
for $H \in \mathcal{DC}$ defined as $H(r):=n\,r(1-r^{-\frac{1}{n}})$,
and for every $t\in (0,1)$
the inequality (\ref{eq:lower twisted curvature bound}) holds.
\end{defn}

\begin{rem}
In the unweighted case where $f=0$,
the notion of the $\kappa$-twisted curvature bound coincides with that of the curvature-dimension condition $\CD((n-1)\kappa,n)$ in the sense of Lott and Villani \cite{LV1}, \cite{LV2} (except for the disjointness of $\mu_{0},\mu_{1}$ in view of Remark \ref{rem:disjoint}).
Similarly,
the notion of the $\kappa$-relaxed twisted curvature bound coincides with that of the curvature-dimension condition $\CD((n-1)\kappa,n)$ in the sense of Sturm \cite{St2}, \cite{St3}.
\end{rem}

Our main result is the following characterization theorem:
\begin{thm}\label{thm:displacement convexity}
Let $(M,d,m)$ be an $n$-dimensional weighted Riemannian manifold,
where $m:=e^{-f} \,\vol$ for some $f\in C^{\infty}(M)$.
Let $\kappa \in \mathbb{R}$.
Then the following statements are equivalent:
\begin{enumerate}\setlength{\itemsep}{+0.7mm}
\item $\ric^{1}_{f} \geq (n-1)\kappa\,e^{\frac{-4f}{n-1}}$; \label{enum:curv cond}
\item $(M,d,m)$ has $\kappa$-twisted curvature bound; \label{enum:twisted curv}
\item $(M,d,m)$ has $\kappa$-relaxed twisted curvature bound. \label{enum:relaxed twisted curv}
\end{enumerate}
\end{thm}

For $K \in \mathbb{R}$ and $N \in [n,\infty]$,
Lott and Villani \cite{LV1} have characterized the curvature condition (\ref{eq:constant Ricci curvature assumption}) by a convexity of entropies on the Wasserstein space (see Theorem 4.22 in \cite{LV1}).
The Lott-Villani theorem in a special case where $f=0,\,K=(n-1)\kappa$ and $N=n$ states that
the statements \ref{enum:curv cond} and \ref{enum:twisted curv} in Theorem \ref{thm:displacement convexity} are equivalent when $f=0$.

\begin{rem}
In \cite{LV1},
they have also investigated the $1$-weighted Ricci curvature (see Definition 4.20 in \cite{LV1}).
But they have defined it as $-\infty$,
which is completely different from our definition.
\end{rem}

For $K \in \mathbb{R}$ and $N \in [n,\infty)$,
Sturm \cite{St3} has characterized a condition that $\ric_{g} \geq K$ and $n\leq N$ (see Theorem 1.7 in \cite{St3}),
where $\ric_{g} \geq K$ means that
for every $x \in M$,
and for every unit tangent vector $v$ at $x$
we have $\ric_{g}(v)\geq K$.
The Sturm theorem in the special case where $K=(n-1)\kappa$ and $N=n$ tells us that
the statements \ref{enum:curv cond} and \ref{enum:relaxed twisted curv} in Theorem \ref{thm:displacement convexity} are equivalent when $f=0$.

One of the key ingredients of the proof of Theorem \ref{thm:displacement convexity} is to obtain inequalities for Jacobians of optimal transport maps
that are associated with $\ric^{1}_{f}$.
We first show an inequality of Riccati type (see Lemma \ref{lem:Riccati inequality}).
From the inequality of Riccati type,
we derive an inequality concerning the concavity of the Jacobians under the curvature condition (\ref{eq:Ricci curvature assumption}) (see Proposition \ref{prop:Jacobian inequality}).
By using the concavity,
we prove that
the curvature condition (\ref{eq:Ricci curvature assumption}) implies the convexity of entropies.

\subsection{Organization}
In Section \ref{sec:Preliminaries},
we review the works done by Wylie and Yeroshkin \cite{WY},
and also recall basics of the optimal transport theory.
In Section \ref{sec:Key inequalities},
we show key inequalities for the proof of Theorem \ref{thm:displacement convexity}.
In Section \ref{sec:Displacement convexity},
we prove Theorem \ref{thm:displacement convexity}.
Furthermore,
under the curvature condition (\ref{eq:Ricci curvature assumption}),
we conclude various interpolation inequalities (see Subsection \ref{sec:Interpolation inequalities}).
In Section \ref{sec:Applications},
we discuss the possibility of deriving functional inequalities from Theorem \ref{thm:displacement convexity}.

\section{Preliminaries}\label{sec:Preliminaries}

\subsection{Geometric analysis on $1$-weighted Ricci curvature}\label{sec:Geometric analysis}
In this subsection,
we briefly recall the work done by Wylie and Yeroshkin \cite{WY} concerning the curvature condition (\ref{eq:Ricci curvature assumption}).
They have suggested a new approach to investigate geometric properties of weighted manifolds.
Let $\nabla$ be the Levi-Civita connection induced from $g$,
and let $\alpha$ be a $1$-form on $M$.
The basic tool in \cite{WY} was the \textit{weighted connection}
\begin{equation*}
\nabla^{\alpha}_{\mathcal{U}} \mathcal{V}:=\nabla_{\mathcal{U}} \mathcal{V}-\alpha(\mathcal{U})\mathcal{V}-\alpha(\mathcal{V})\mathcal{U}
\end{equation*}
which is torsion free,
affine,
and projectively equivalent to $\nabla$.
They have studied weighted manifolds in view of this weighted connection.

\begin{rem}
We recall that
two torsion free, affine connections $\widehat{\nabla},\overline{\nabla}$ are said to be \textit{projectively equivalent} if
they possess the same geodesics up to re-parametrization.
Due to Weyl \cite{We},
it is well-known that
they are projectively equivalent if and only if there is a $1$-form $\widehat{\alpha}$ such that
\begin{equation*}
\widehat{\nabla}_{\mathcal{U}} \mathcal{V}=\overline{\nabla}_{\mathcal{U}} \mathcal{V}+\widehat{\alpha}(\mathcal{U})\mathcal{V}+\widehat{\alpha}(\mathcal{V})\mathcal{U}.
\end{equation*}
\end{rem}

They have examined the relation between the $1$-weighted Ricci curvature and the Ricci curvature induced from $\nabla^{\alpha}$.
The \textit{$\nabla^{\alpha}$-curvature tensor} and the \textit{$\nabla^{\alpha}$-Ricci tensor} are defined as
\begin{align*}\notag
R^{\nabla^{\alpha}}(\mathcal{U},\mathcal{V})\mathcal{W}&:=\nabla^{\alpha}_{\mathcal{U}}\nabla^{\alpha}_{\mathcal{V}}\mathcal{W}-\nabla^{\alpha}_{\mathcal{V}}\nabla^{\alpha}_{\mathcal{U}}\mathcal{W}-\nabla^{\alpha}_{[\mathcal{U},\mathcal{V}]}\mathcal{W},\\ \label{eq:weighed affine connection Ricci tensor}
\ric^{\nabla^{\alpha}}(\mathcal{V},\mathcal{W})&:= \tr_{g} \left[\mathcal{U}\mapsto R^{\nabla^{\alpha}}(\mathcal{U},\mathcal{V})\mathcal{W}  \right],
\end{align*}
where $\tr_{g}$ denotes the trace with respect to $g$.
Let us consider a closed $1$-form $\alpha_{f}$ on $M$ defined by
\begin{equation*}
\alpha_{f}:=\frac{df}{n-1}.
\end{equation*}
The first key observation is that
$\ric^{\nabla^{\alpha_{f}}}$ coincides with the $1$-weighted Ricci tensor $\ric^{1}_{f}$ (see Proposition 3.3 in \cite{WY}).

They also investigated geodesics for $\nabla^{\alpha}$.
For $x\in M$,
we denote by $U_{x}M$ the unit tangent sphere at $x$.
For $v\in U_{x}M$,
let $\gamma_{v}:[0,\infty)\to M$ be the ($\nabla$-)geodesic with initial conditions $\gamma_{v}(0)=x$ and $\gamma'_{v}(0)=v$.
We now define a function $s_{f,v}:[0,\infty]\to [0,s_{f,v,\infty}]$ by
\begin{equation*}
s_{f,v}(t):=\int^{t}_{0}\,e^{\frac{-2f(\gamma_{v}(\xi))}{n-1}}\,d\xi,\quad s_{f,v,\infty}:=\int^{\infty}_{0}\,e^{\frac{-2f(\gamma_{v}(\xi))}{n-1}}\,d\xi.
\end{equation*}
Let $t_{f,v}:[0,s_{f,v,\infty}]\to [0,\infty]$ be the inverse function of $s_{f,v}$.
The second key observation is that
a curve $\widehat{\gamma}_{v}:[0,s_{f,v,\infty})\to M$ defined as $\widehat{\gamma}_{v}:=\gamma_{v}\circ t_{f,v}$ is a $\nabla^{\alpha_{f}}$-geodesic (cf. Proposition 3.1 in \cite{WY}).

Summarizing the above two key observations,
they have concluded the following interpretation of the curvature condition (\ref{eq:Ricci curvature assumption})
in terms of the $\nabla^{\alpha_{f}}$-Ricci curvature $\ric^{\nabla^{\alpha_{f}}}$ (see Subsection 2.1 in \cite{WY}):
\begin{prop}[\cite{WY}]\label{prop:interpretation of curvature condition}
For $\kappa \in \mathbb{R}$,
the following are equivalent:
\begin{enumerate}\setlength{\itemsep}{+1.0mm}
\item $\ric^{1}_{f}(\gamma'_{v}(t)) \geq (n-1)\kappa\,e^{\frac{-4f(\gamma_{v}(t))}{n-1}}$ for all $v\in U_{x}M$ and $t\in [0,\infty)$;
\item $\ric^{\nabla^{\alpha_{f}}}(\widehat{\gamma}'_{v}(s))\geq (n-1)\kappa$ for all $v\in U_{x}M$ and $s\in [0,s_{f,v,\infty})$.
\end{enumerate}
\end{prop}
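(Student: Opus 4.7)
The plan is to string together the two observations of Wylie--Yeroshkin that are summarized just before the statement. The first says that the $(0,2)$-tensors $\ric^{\nabla^{\alpha_f}}$ and $\ric^{1}_{f}$ coincide pointwise, and the second says that the reparametrized curve $\widehat{\gamma}_{f,v}=\gamma_{v}\circ t_{f,v}$ is a $\nabla^{\alpha_{f}}$-geodesic. With these in hand, the equivalence of (1) and (2) should reduce to tracking how the curvature quadratic form scales when one passes from the unit tangent vector $\gamma'_{v}(t)$ to the (non-unit) tangent vector $\widehat{\gamma}'_{f,v}(s)$.

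Concretely, I would first compute $\widehat{\gamma}'_{f,v}(s)$ via the chain rule. From the definition of $s_{f,v}$ we have $s'_{f,v}(t)=e^{-2f(\gamma_{v}(t))/(n-1)}$; since $t_{f,v}$ is the inverse of $s_{f,v}$, this yields $t'_{f,v}(s)=e^{2f(\gamma_{v}(t_{f,v}(s)))/(n-1)}$, and hence
$$\widehat{\gamma}'_{f,v}(s)=e^{\frac{2f(\gamma_{v}(t_{f,v}(s)))}{n-1}}\,\gamma'_{v}(t_{f,v}(s)).$$

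Next, I would use that $\ric^{1}_{f}$ is a symmetric bilinear form, so that $\ric^{1}_{f}(\lambda w)=\lambda^{2}\,\ric^{1}_{f}(w)$, together with the identity $\ric^{\nabla^{\alpha_{f}}}=\ric^{1}_{f}$, to obtain
$$\ric^{\nabla^{\alpha_{f}}}\!\bigl(\widehat{\gamma}'_{f,v}(s)\bigr)=e^{\frac{4f(\gamma_{v}(t_{f,v}(s)))}{n-1}}\,\ric^{1}_{f}\!\bigl(\gamma'_{v}(t_{f,v}(s))\bigr).$$
Because $s\mapsto t_{f,v}(s)$ is a smooth increasing bijection $[0,S_{f,v,\infty})\to[0,\infty)$, requiring the left-hand side to be $\geq(n-1)\kappa$ for all $s\in[0,S_{f,v,\infty})$ is equivalent, via the substitution $t=t_{f,v}(s)$, to requiring $\ric^{1}_{f}(\gamma'_{v}(t))\geq(n-1)\kappa\,e^{-4f(\gamma_{v}(t))/(n-1)}$ for all $t\in[0,\infty)$, which is exactly (1).

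There is no real obstacle here, since all the work has been done in the two cited propositions of Wylie--Yeroshkin; the only thing to be careful about is the bookkeeping of the reparametrization and the fact that $\ric^{1}_{f}$ must be interpreted as a quadratic form, which is what produces the exponent $4f/(n-1)$ (twice $2f/(n-1)$) and hence matches the conformal weight appearing in condition (1).
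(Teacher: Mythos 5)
Your proposal is correct and follows exactly the route the paper intends: the paper gives no independent proof, but presents the proposition as the combination of the two cited observations of Wylie--Yeroshkin ($\ric^{\nabla^{\alpha_{f}}}=\ric^{1}_{f}$ and $\widehat{\gamma}_{f,v}$ being a $\nabla^{\alpha_{f}}$-geodesic), and your chain-rule computation $\widehat{\gamma}'_{f,v}(s)=e^{\frac{2f(\gamma_{v}(t_{f,v}(s)))}{n-1}}\,\gamma'_{v}(t_{f,v}(s))$ together with the quadratic scaling of the Ricci form is precisely the bookkeeping that converts one bound into the other. No gaps.
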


Keeping in mind Proposition \ref{prop:interpretation of curvature condition},
they have developed comparison geometry under the curvature condition (\ref{eq:Ricci curvature assumption}).
Before their work,
Wylie \cite{W} has obtained a splitting theorem of Cheeger-Gromoll type under the condition $\ric^{N}_{f}\geq 0$ for $N \in (-\infty,1]$ (see Theorem 1.2 and Corollary 1.3 in \cite{W}).
After that
they have proved a Laplacian comparison for the distance function from a single point (see Theorem 4.4 in \cite{WY}),
a diameter comparison of Bonnet-Myers type for the re-parametrized distance $d_{f}$ (see Theorem 2.2 in \cite{WY}),
a maximal diameter theorem of Cheng type for the deformed metric $e^{\frac{-4f}{n-1}}g$ (see Theorem 2.6 in \cite{WY}),
and a volume comparison of Bishop-Gromov type for the weighted volume measure $e^{-\frac{n+1}{n-1}f} \vol$ (see Theorem 4.5 in \cite{WY}).

For later convenience,
we will review the diameter comparison.
For $x\in M$,
we denote by $d_{x}:M\to \mathbb{R}$ the distance function from $x$ defined as $d_{x}(y):=d(x,y)$.
For $v \in U_{x}M$
we set
\begin{equation*}\label{eq:weighted cut value}
\tau(v):=\sup \{\,t>0 \mid d_{x}(\gamma_{v}(t))=t\,\},\quad \tau_{f}(v):=s_{f,v}(\tau(v)).
\end{equation*}
They have obtained the following comparison for the re-parametrized distance $d_{f}$ (see Theorem 2.2 in \cite{WY}):
\begin{thm}[\cite{WY}]\label{thm:cut value estimate}
For $\kappa>0$,
if $\ric^{1}_{f} \geq (n-1)\kappa\,e^{\frac{-4f}{n-1}}$,
then for all $x \in M$ and $v \in U_{x}M$
we have
\begin{equation*}
\tau_{f}(v)\leq \frac{\pi}{\sqrt{\kappa}}.
\end{equation*}
Moreover,
for the re-parametrized distance $d_{f}$,
we have
\begin{equation*}
\sup_{x,y\in M}\,d_{f}(x,y)\leq \frac{\pi}{\sqrt{\kappa}}.
\end{equation*}
\end{thm}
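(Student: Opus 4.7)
The plan is to deduce the diameter estimate from the classical Bonnet--Myers theorem applied through the affine-connection viewpoint of Proposition \ref{prop:interpretation of curvature condition}. For a fixed $v\in U_{x}M$, the $\nabla$-geodesic $\gamma_{v}$ is minimizing on $[0,\tau_{x}(v))$, and its reparametrization $\widehat{\gamma}_{f,v}=\gamma_{v}\circ t_{f,v}$ is a $\nabla^{\alpha_{f}}$-geodesic along which, by Proposition \ref{prop:interpretation of curvature condition}, $\ric^{\nabla^{\alpha_{f}}}(\widehat{\gamma}'_{f,v}(s))\geq (n-1)\kappa$ for every $s\in[0,\tau_{f,x}(v))$. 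The task is thus to show that the $\nabla^{\alpha_{f}}$-arclength $s$ of $\widehat{\gamma}_{f,v}$ before it ceases to minimize cannot exceed $\pi/\sqrt{\kappa}$.

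I would next set up the Jacobi/Riccati machinery for $\nabla^{\alpha_{f}}$ along $\widehat{\gamma}_{f,v}$. Take $n-1$ linearly independent $\nabla^{\alpha_{f}}$-Jacobi fields transverse to $\widehat{\gamma}'_{f,v}$ vanishing at $s=0$, and assemble them into a tensor $A(s)$ with $A(0)=0$, $A'(0)=\Id$ satisfying the affine Jacobi equation $A''+R^{\nabla^{\alpha_{f}}}(\cdot,\widehat{\gamma}'_{f,v})\widehat{\gamma}'_{f,v}\circ A=0$. Defining the Riccati tensor $B:=A'A^{-1}$ on the interval where $A$ is invertible, one obtains $B'+B^{2}+R^{\nabla^{\alpha_{f}}}(\cdot,\widehat{\gamma}'_{f,v})\widehat{\gamma}'_{f,v}=0$. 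Taking trace and applying $\tr(B^{2})\geq (\tr B)^{2}/(n-1)$ yields the scalar Riccati inequality
\begin{equation*}
h'(s)+\frac{h(s)^{2}}{n-1}+\ric^{\nabla^{\alpha_{f}}}(\widehat{\gamma}'_{f,v}(s))\leq 0,\qquad h:=\tr B.
\end{equation*}

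A standard Sturm comparison against the model ODE $h_{0}'+h_{0}^{2}/(n-1)+(n-1)\kappa=0$, whose explicit solution $h_{0}(s)=(n-1)\sqrt{\kappa}\cot(\sqrt{\kappa}\,s)$ blows down to $-\infty$ at $s=\pi/\sqrt{\kappa}$, then forces $h$ to blow down on $[0,\pi/\sqrt{\kappa}]$, producing a $\nabla^{\alpha_{f}}$-conjugate point of $\widehat{\gamma}_{f,v}$ within that interval. Since $\widehat{\gamma}_{f,v}$ and $\gamma_{v}$ traverse the same locus in $M$ and the reparametrization preserves the vanishing of Jacobi fields, this yields a first conjugate point of $\gamma_{v}$ no later than $t=t_{f,v}(\pi/\sqrt{\kappa})$; as a minimizing $\nabla$-geodesic cannot pass its first conjugate point, $\tau_{f,x}(v)=s_{f,v}(\tau_{x}(v))\leq\pi/\sqrt{\kappa}$. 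For the global bound, pick any $x,y\in M$ and a minimizing unit-speed $\gamma$ from $x$ to $y$ with initial tangent $v$; then $d_{f}(x,y)\leq \int_{0}^{d(x,y)} e^{-2f(\gamma(\xi))/(n-1)}\,d\xi\leq s_{f,v}(\tau_{x}(v))=\tau_{f,x}(v)\leq \pi/\sqrt{\kappa}$.

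The principal obstacle is making the Riccati analysis rigorous for the non-metric connection $\nabla^{\alpha_{f}}$: symmetry of $B$ needed for Cauchy--Schwarz, and identification of $\nabla^{\alpha_{f}}$-conjugate points with $\nabla$-ones, are not automatic and have to be verified exploiting the specific closed-form $\alpha_{f}=df/(n-1)$. A cleaner alternative, which is effectively the route of Wylie--Yeroshkin, is to stay entirely on $M$ with $\gamma_{v}$: derive a Riccati-type inequality for the mean curvature of the geodesic spheres weighted by $e^{-2f/(n-1)}$, and then substitute $s=s_{f,v}(t)$. The exponent $-4f/(n-1)$ in the curvature hypothesis is precisely calibrated so that this change of variable absorbs every weight and returns the classical model Riccati, from which the bound $\pi/\sqrt{\kappa}$ follows immediately.
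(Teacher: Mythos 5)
The paper itself gives no proof of Theorem \ref{thm:cut value estimate}: it is quoted from Wylie--Yeroshkin \cite{WY}. Your ``cleaner alternative'' is precisely the argument of \cite{WY}, and it is the viable route: along the Riemannian geodesic $\gamma_{v}$ one takes the $f$-weighted mean curvature of the geodesic spheres, $\lambda(t)=\Delta d_{x}(\gamma_{v}(t))-g\bigl((\nabla f)_{\gamma_{v}(t)},\gamma_{v}'(t)\bigr)$, multiplies it by the conformal factor $e^{\frac{2f}{n-1}}$ (note the sign: the factor is $e^{+2f/(n-1)}$, not $e^{-2f/(n-1)}$ as in your phrasing), and reparametrizes by $s=s_{f,v}(t)$; since $ds/dt=e^{-2f/(n-1)}$, the two weight factors combine exactly with the hypothesis $\ric^{1}_{f}\geq(n-1)\kappa\,e^{-4f/(n-1)}$ to give the unweighted Riccati inequality $\widehat{\lambda}'+\widehat{\lambda}^{2}/(n-1)+(n-1)\kappa\leq 0$ in the variable $s$, and the standard blow-down comparison forces a conjugate point within reparametrized length $\pi/\sqrt{\kappa}$, whence $\tau_{f,x}(v)\leq\pi/\sqrt{\kappa}$ (the case $\tau_{x}(v)=\infty$ is handled identically). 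This is the same mechanism the present paper uses for transport Jacobians in Lemmas \ref{lem:Riccati inequality} and \ref{lem:pre-Jacobian inequality}. Your deduction of the ``moreover'' part is also correct: $d_{f}$ is an infimum over minimal geodesics, any one of which is minimizing up to $d(x,y)\leq\tau_{x}(v)$, so $d_{f}(x,y)\leq\tau_{f,x}(v)$.

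Your first route, through $\nabla^{\alpha_{f}}$ and Proposition \ref{prop:interpretation of curvature condition}, should not be offered as a proof as it stands, for the reasons you yourself flag: for the non-metric connection $\nabla^{\alpha_{f}}$ the Riccati operator $B=A'A^{-1}$ is not automatically symmetric, so the trace inequality $\operatorname{trace}(B^{2})\geq(\operatorname{trace}B)^{2}/(n-1)$ requires justification, and the identification of $\nabla^{\alpha_{f}}$-conjugate points along $\widehat{\gamma}_{f,v}$ with $\nabla$-conjugate points along $\gamma_{v}$ is not automatic for projectively related connections. Both issues can be resolved using that $\alpha_{f}$ is exact, but doing so amounts to redoing the reparametrized-Riccati computation; the second route reaches the conclusion directly and matches the cited source.
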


We finally list some related works to \cite{WY}.
Li and Xia \cite{LX} have produced formulas of Bochner type and Reilly type with respect to $\ric^{\nabla^{\alpha}}$.
The author \cite{S} has studied comparison geometry of manifolds with boundary under the curvature condition (\ref{eq:Ricci curvature assumption}).

\subsection{Optimal transport theory}\label{sec:Optimal transport theory}
In the present subsection,
we correct some basic facts of the optimal transport theory in our setting,
which will be used in the proof of our main result.
We refer to \cite{CMS}, \cite{O1}, \cite{V},
and also the preliminaries of \cite{OT1}, \cite{OT2}.

Let $(Z,d_{Z})$ be a metric space.
A curve $\gamma:[0,l]\to Z$ is said to be a \textit{minimal geodesic} if
there exists $a \geq 0$ such that for all $t_{0},t_{1} \in [0,l]$ we have $d_{Z}(\gamma(t_{0}),\gamma(t_{1}))=a\,\vert t_{0}-t_{1} \vert$.
Moreover,
if $a=1$,
then $\gamma$ is called a \textit{unit speed minimal geodesic}.

Let $\mu,\nu$ be two Borel probability measures on $M$.
A Borel probability measure $\pi$ on $M\times M$ is said to be a \textit{coupling of $(\mu,\nu)$} if
$\pi(X\times M)=\mu(X)$ and $\pi(M\times X)=\nu(X)$ for all Borel subsets $X\subset M$.
Let $\Pi(\mu,\nu)$ denote the set of all couplings of $(\mu,\nu)$.
The \textit{$L^{2}$-Wasserstein distance function $W_{2}:P_{2}(M)\times P_{2}(M)\to [0,\infty)$} is defined as
\begin{equation}\label{eq:Wasserstein distance}
W_{2}(\mu,\nu):=\inf_{\pi \in \Pi(\mu,\nu)}\, \left(\int_{M\times M}\,d(x,y)^{2}\,d\pi(x,y)\right)^{\frac{1}{2}}.
\end{equation}
The pair $(P_{2}(M),W_{2})$ is well-known to be a complete separable metric space (see e.g., Theorem 6.18 in \cite{V}),
and called the \textit{$L^{2}$-Wasserstein space over $M$}.
A coupling $\pi \in \Pi(\mu,\nu)$ is said to be \textit{optimal} if it attains the infimum of (\ref{eq:Wasserstein distance}).

We now recall the following fundamental characterization result for the optimal coupling on Riemannian manifold (see \cite{B}, \cite{FG}, \cite{M}):
\begin{thm}\label{thm:McCann theorem}
For $\mu \in P^{ac}_{2}(M)$ and $\nu\in P_{2}(M)$,
there exists a locally semi-convex function $\phi$ on $M$ such that
a map $F_{t}$ on $M$ defined by
\begin{equation}\label{eq:interpolation map}
F_{t}(z):=\exp_{z} (t \nabla \phi(z))
\end{equation}
gives a unique optimal coupling $\pi$ of $(\mu,\nu)$ via the pushforward measure $\pi:=(F_{0}\times F_{1})_{\#}\mu$ of $\mu$ by $F_{0}\times F_{1}$,
and also determines a unique minimal geodesic $(\mu_{t})_{t\in [0,1]}$ in $(P_{2}(M),W_{2})$ from $\mu$ to $\nu$ via the pushforward measure $\mu_{t}:=(F_{t})_{\#}\mu_{0}$ of $\mu_{0}$ by $F_{t}$.
Here $\exp_{z}$ denotes the exponential map at $z$,
and $\nabla \phi$ denotes the gradient of $\phi$.
\end{thm}

Brenier \cite{B} has firstly established such characterization result in the Euclidean setting.
McCann \cite{M} has shown Theorem \ref{thm:McCann theorem} in the case where $M$ is compact (see Section 3 in \cite{M}),
and Figalli and Gigli \cite{FG} have extended it to the non-compact case (see Theorem 1 in \cite{FG}).

The locally semi-convex function $\phi$ obtained in Theorem \ref{thm:McCann theorem} is called the \textit{Kantorovich potential},
which is twice differentiable $\mu$-almost everywhere due to the Alexandrov-Bangert theorem (\cite{A}, \cite{Ban}).
The Kantorovich potential $\phi$ further enjoys the following property (see e.g., Propositions 2.5 and 4.1 in \cite{CMS}):
If $\phi$ is twice differentiable at $x$,
then for every $t\in [0,1]$,
the point $F_{t}(x)$ does not belong to the cut locus $\cut x$ of $x$,
and the differential $(dF_{t})_{x}$ of $F_{t}$ at $x$ is well-defined.
Moreover,
$\phi$ also satisfies the following property (see e.g., Proposition 5.4 in \cite{CMS}):
If $\nu$ also belongs to $P^{ac}_{2}(M)$,
then the unique minimal geodesic $(\mu)_{t\in [0,1]}$ lies in $P^{ac}_{2}(M)$.

We close this subsection with the following \textit{Jacobian equation},
which is also called the \textit{Monge-Amp\'ere equation} (see e.g., Theorem 4.2 in \cite{CMS}):
\begin{thm}\label{thm:Jacobian equation}
Let $\mu,\nu\in P^{ac}_{2}(M)$,
and let $\phi$ denote the Kantorovich potential obtained in Theorem \ref{thm:McCann theorem}.
Then for $\mu$-almost every $x$,
we have:
\begin{enumerate}\setlength{\itemsep}{+1.0mm}
\item $\phi$ is twice differential at $x$, and in particular, $F_{t}(x)\notin \cut x$ for every $t\in [0,1]$;
\item for every $t\in [0,1]$ the determinant $\det\, (dF_{t})_{x}$ of $dF_{t}$ at $x$ is positive;
\item $\rho_{0}(x)=\rho_{1}(F_{1}(x))\,e^{-f(F_{1}(x))+f(x)}\,\det  (dF_{1})_{x}$,
         where $\rho_{0}$ and $\rho_{1}$ are the densities of $\mu$ and of $\nu$ with respect to $m$,
        respectively.
\end{enumerate}
\end{thm}

\section{Key inequalities}\label{sec:Key inequalities}
In the present section,
we will prove the following key inequality for the proof of Theorem \ref{thm:displacement convexity}:
\begin{prop}\label{prop:Jacobian inequality}
Let $\mu,\nu\in P^{ac}_{2}(M)$ be disjointly supported,
and let $\phi$ denote the Kantorovich potential obtained in Theorem \ref{thm:McCann theorem}.
Fix a point $x \in M$.
Assume that
$\phi$ is twice differentiable at $x$,
and $\det \,(dF_{t})_{x}>0$ for every $t \in [0,1]$.
For each $t \in [0,1]$
we put
\begin{equation*}\label{eq:weighted Jacobian}
J_{t}(x):=e^{-f(F_{t}(x))+f(x)}\,\det (d F_{t})_{x}.
\end{equation*}
For $\kappa \in \mathbb{R}$,
if $\ric^{1}_{f} \geq (n-1)\kappa\,e^{\frac{-4f}{n-1}}$,
then for every $t \in (0,1)$
\begin{equation*}
J_{t}(x)^{\frac{1}{n}} \geq (1-t)\,  \beta_{\kappa,f,1-t}(F_{1}(x),x)^{\frac{1}{n}}  \, J_{0}(x)^{\frac{1}{n}}+t\,  \beta_{\kappa,f,t}(x,F_{1}(x))^{\frac{1}{n}}  \,J_{1}(x)^{\frac{1}{n}}.
\end{equation*}
\end{prop}

Throughout this section,
as in Proposition \ref{prop:Jacobian inequality},
let $\mu,\nu\in P^{ac}_{2}(M)$ be disjointly supported,
and let $\phi$ denote the associated Kantorovich potential.
Moreover,
for a fixed point $x \in M$,
we assume that
$\phi$ is twice differentiable at $x$,
and $\det \,(dF_{t})_{x}>0$ for all $t \in [0,1]$.

\subsection{Riccati inequalities}
Define a curve $\gamma:[0,1] \to M$ by $\gamma(t):=F_{t}(x)$,
and choose an orthonormal basis $\{e_{i}\}^{n}_{i=1}$ of the tangent space at $x$ with $e_{n}=\gamma'(0)/\Vert \gamma'(0)\Vert$,
where $\Vert \cdot \Vert$ is the canonical norm induced from $g$.
For each $i=1,\dots,n$,
we define a Jacobi field $E_{i}$ along $\gamma$ by $E_{i}(t):=(dF_{t})_{x}(e_{i})$ (cf. the proof of Theorem 1.7 in \cite{St3}).
For each $t \in [0,1]$
let $A(t)=(a_{ij}(t))$ be an $n \times n$ matrix determined by
\begin{equation}\label{eq:Jacobi field}
E'_{i}(t)=\sum^{n}_{j=1}\,a_{ij}(t)\,E_{j}(t).
\end{equation}
We define a function $h:[0,1]\to \mathbb{R}$ by
\begin{equation*}
h(t):=\log \,\det (dF_{t})_{x} -\int^{t}_{0}\,a_{nn}(\xi)\,d\xi.
\end{equation*}

It is well-known that
the function $h$ satisfies the following inequality of Riccati type (see e.g., (1.4), (1.9) in \cite{St3}, and (14.21) in \cite{V}):
\begin{lem}\label{lem:unweighted Riccati inequality}
For every $t \in (0,1)$
we have
\begin{equation*}
h''(t) \leq  -\frac{h'(t)^{2}}{n-1}-\ric_{g}(\gamma'(t)).
\end{equation*}
\end{lem}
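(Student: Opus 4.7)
The plan is to derive this Riccati-type inequality from the matrix form of the Jacobi equation by the classical method, the key point being that subtracting off $a_{nn}$ in the definition of $h_{x}$ encodes restriction to the $(n-1)$-dimensional normal complement of $\gamma'_{x}(t)$, which is what yields the factor $n-1$ in the denominator via Cauchy--Schwarz.

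First, since $F_{t}(z) = \exp_{z}(-t\nabla\phi(z))$ makes $t \mapsto F_{t}(z)$ a geodesic for each $z$, the vector fields $E_{i}(t) = (dF_{t})_{x}(e_{i})$ along $\gamma_{x}(t) = F_{t}(x)$ are Jacobi fields. Differentiating the defining relation $E'_{i} = \sum_{j} a_{ij} E_{j}$ once more in $t$ and substituting the Jacobi equation $E''_{i} + R(E_{i}, \gamma'_{x})\gamma'_{x} = 0$ produces the matrix Riccati identity
\begin{equation*}
A'(t) + A(t)^{2} + \mathcal{R}(t) = 0,
\end{equation*}
where $\mathcal{R}(t) = (r_{ij}(t))$ is the matrix of the endomorphism $R(\cdot,\gamma'_{x})\gamma'_{x}$ in the basis $\{E_{i}(t)\}$. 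Taking the trace and invoking the basis-invariant identity $\tr \mathcal{R}(t) = \ric_{g}(\gamma'_{x}(t))$ yields
\begin{equation*}
(\tr A)'(t) + \tr(A(t)^{2}) + \ric_{g}(\gamma'_{x}(t)) = 0.
\end{equation*}

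Next, from $(\log\det(dF_{t})_{x})' = \tr A(t)$ I obtain $h'_{x}(t) = \tr A(t) - a_{nn}(t)$, and reading off the $(n,n)$ entry of the matrix Riccati equation gives $a'_{nn}(t) = -(A(t)^{2})_{nn} - r_{nn}(t)$, so that
\begin{equation*}
h''_{x}(t) = -\bigl(\tr(A(t)^{2}) - (A(t)^{2})_{nn}\bigr) + r_{nn}(t) - \ric_{g}(\gamma'_{x}(t)).
\end{equation*}
Now the choice $e_{n} = \gamma'_{x}(0)/\Vert\gamma'_{x}(0)\Vert$ enters: decomposing each $E_{i}(t) = \bar{E}_{i}(t) + \lambda_{i}(t)\gamma'_{x}(t)$ into parts perpendicular and tangential to $\gamma'_{x}(t)$, one has $\lambda''_{i}(t) = 0$ (because $R(\gamma',\gamma')\gamma' = 0$), the perpendicular fields $\bar{E}_{i}(t)$ are themselves Jacobi fields valued in the $(n-1)$-dimensional bundle $\gamma'_{x}(t)^{\perp}$, and a direct change-of-basis computation identifies $\tr A(t) - a_{nn}(t)$ with the trace $\tr B(t)$ of the shape operator $B(t)$ of these perpendicular Jacobi fields, and $\tr(A(t)^{2}) - (A(t)^{2})_{nn} - r_{nn}(t)$ with its Frobenius norm squared $\tr(B(t)^{2})$.

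With that identification in hand, the conclusion is immediate: since $B(t)$ is an $(n-1)\times(n-1)$ matrix, the Cauchy--Schwarz inequality gives $\tr(B(t)^{2}) \geq (\tr B(t))^{2}/(n-1)$, whence
\begin{equation*}
h''_{x}(t) = -\tr(B(t)^{2}) - \ric_{g}(\gamma'_{x}(t)) \leq -\frac{(\tr B(t))^{2}}{n-1} - \ric_{g}(\gamma'_{x}(t)) = -\frac{h'_{x}(t)^{2}}{n-1} - \ric_{g}(\gamma'_{x}(t)).
\end{equation*}
The main technical obstacle is precisely the change-of-basis identification: passing from the (non-orthonormal) Jacobi field basis $\{E_{i}\}$, in which $A$ is defined, to the parallel orthonormal frame adapted to $\gamma'_{x}$, in which $B$ acts naturally on the normal bundle. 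This requires carefully tracking how the coefficients $\lambda_{i}(t)$ (affine in $t$) of the tangential parts factor out of $\det(dF_{t})_{x}$, and is what motivates the explicit subtraction of $\int_{0}^{t} a_{nn}(\xi)\,d\xi$ in the definition of $h_{x}$ rather than a more symmetric normalization. Since this computation is classical (see in particular Chapter 14 of Villani's monograph and the Sturm references cited in the excerpt), I would state the identification and refer to the literature rather than reproducing the elementary linear-algebraic manipulation in full.
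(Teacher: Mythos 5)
Your opening computations are fine and match the standard start of the classical argument that the paper is pointing to when it cites Sturm and Chapter 14 of Villani (the paper itself gives no proof of this lemma): with $A$ defined by $E_i'=\sum_j a_{ij}E_j$ one indeed gets $A'+A^2+\mathcal{R}=0$ in the Jacobi basis, $(\log\det (dF_t)_x)'=\tr A$, and hence $h_x''=-(\tr (A^2)-(A^2)_{nn})+r_{nn}-\ric_g(\gamma_x'(t))$. The genuine gap lies in the two steps you call immediate or defer to the literature. First, the inequality $\tr(B^2)\ge (\tr B)^2/(n-1)$ is false for a general square matrix (for an antisymmetric $B$ one even has $\tr(B^2)<0$), so it requires the self-adjointness of the relevant operator, and this is exactly where the optimal-transport structure enters: writing the endomorphism $E\mapsto E'$ in a parallel orthonormal frame as $U=J'J^{-1}$ with $J(0)=\Id$ and $J'(0)=-\Hess\phi(x)$ symmetric (Alexandrov twice differentiability of the $c$-concave $\phi$ at $x$), symmetry of $U(t)$ for all $t$ follows from conservation of the Wronskian $J^TJ'-(J')^TJ$ along the Jacobi equation. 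You never state, prove, or use this; for a family of Jacobi fields with non-symmetric initial data the claimed inequality, and the lemma itself, fail (a two-dimensional flat example with $J(t)=\Id+tC$, $C$ antisymmetric, gives $h_x''>0=-\ric$ for small $t>0$), so this is a missing idea rather than an omitted routine detail.

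Second, the asserted exact identifications $\tr A-a_{nn}=\tr B$ and $\tr(A^2)-(A^2)_{nn}-r_{nn}=\tr(B^2)$ are not how the classical reduction works and are doubtful as equalities: for $t>0$ the basis $\{E_j(t)\}$ is neither orthonormal nor adapted to $\gamma_x'(t)^{\perp}$ (in particular $E_n(t)$ need not stay tangent to $\gamma_x$, so $a_{nn}$ and $r_{nn}$ are not the frame quantities $\langle U\tilde e_n,\tilde e_n\rangle$ and $\langle R(\tilde e_n,\gamma_x')\gamma_x',\tilde e_n\rangle=0$), and your $B$ is not well defined as stated, since the $n$ projected fields $\bar E_i$ live in an $(n-1)$-dimensional bundle. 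In the standard proof one works with the symmetric matrix $U$ in a parallel orthonormal frame with $\tilde e_n=\gamma_x'/\Vert\gamma_x'\Vert$, uses $R_{nn}=0$, and the reduction is a chain of inequalities, $\tr(U^2)-(U^2)_{nn}=\tr(\hat U^2)+\sum_{i<n}U_{in}^2\ge \tr(\hat U^2)\ge (\tr\hat U)^2/(n-1)$ for the block $\hat U=(U_{ij})_{i,j<n}$, not an exact change of basis; if you insist on the paper's normalization by $a_{nn}$ in the Jacobi basis you must in addition relate it to the frame quantity, which your outline does not do. In short, the material you propose to "state and refer to the literature'' is precisely the content of the lemma.
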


We define a function $l:[0,1]\to \mathbb{R}$ by
\begin{equation*}\label{eq:weighted log Jacobian}
l(t):=h(t)-f(\gamma(t))+f(x).
\end{equation*}

For distance functions,
Wylie and Yeroshkin \cite{WY} have obtained an inequality of Riccati type that is associated with $\ric^{1}_{f}$ (see Lemma 4.1 in \cite{WY}).
By using the same method,
we have the following:
\begin{lem}\label{lem:Riccati inequality}
For every $t \in (0,1)$
we have
\begin{equation*}
\left(   e^{\frac{2f(\gamma(t))}{n-1}} \, l'(t)   \right)' \leq -e^{\frac{2 f(\gamma(t))}{n-1}} \left(      \frac{ l'(t)^{2}}{n-1}+\ric^{1}_{f}(\gamma'(t))      \right).
\end{equation*}
\end{lem}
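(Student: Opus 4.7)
The plan is to derive this $1$-weighted Riccati inequality directly from the unweighted version (Lemma \ref{lem:unweighted Riccati inequality}) by substituting the definition (\ref{eq:weighted log Jacobian}) of $l_{x}$ and rearranging. Abbreviating $\bar f(t) := f(\gamma_{x}(t))$ and noting that $\gamma_{x}(t) = \exp_{x}(-t\nabla\phi(x))$ is an honest $\nabla$-geodesic, one has $\bar f'(t) = df(\gamma'_{x}(t))$ and $\bar f''(t) = \Hess f(\gamma'_{x}(t), \gamma'_{x}(t))$, so the definition (\ref{eq:def of weighted Ricci curvature}) at $N=1$ reduces to the pointwise identity
\begin{equation*}
\ric^{1}_{f}(\gamma'_{x}(t)) = \ric_{g}(\gamma'_{x}(t)) + \bar f''(t) + \frac{\bar f'(t)^{2}}{n-1};
\end{equation*}
the $+\bar f'(t)^{2}/(n-1)$ term arises because $-df\otimes df/(N-n) = +df\otimes df/(n-1)$ when $N-n = 1-n<0$, and this sign flip is really the whole point of the $N=1$ theory.

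First I would expand the left-hand side of the claim by the product rule:
\begin{equation*}
\left( e^{\frac{2\bar f(t)}{n-1}} l'_{x}(t)\right)' = e^{\frac{2\bar f(t)}{n-1}} \left( l''_{x}(t) + \frac{2\bar f'(t)\, l'_{x}(t)}{n-1}\right).
\end{equation*}
Next, from (\ref{eq:weighted log Jacobian}) I use $l'_{x} = h'_{x} - \bar f'$ and $l''_{x} = h''_{x} - \bar f''$, and bound $h''_{x}$ by Lemma \ref{lem:unweighted Riccati inequality}; rewriting $h'_{x}=l'_{x}+\bar f'$ inside the square gives
\begin{equation*}
l''_{x}(t) \leq -\frac{(l'_{x}(t) + \bar f'(t))^{2}}{n-1} - \ric_{g}(\gamma'_{x}(t)) - \bar f''(t).
\end{equation*}
Finally, adding $\frac{2\bar f'(t) l'_{x}(t)}{n-1}$ to both sides, the cross term in the expansion of $(l'_{x}+\bar f')^{2}$ is exactly cancelled by the added quantity, and the identity above for $\ric^{1}_{f}$ reassembles the surviving terms into $-\frac{l'_{x}(t)^{2}}{n-1} - \ric^{1}_{f}(\gamma'_{x}(t))$. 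Multiplying by $e^{\frac{2\bar f(t)}{n-1}}$ yields the claim.

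I do not anticipate a serious obstacle here; the whole argument is algebraic bookkeeping already implicit in the computation of Wylie and Yeroshkin for distance functions (Lemma~4.1 of \cite{WY}), transferred to the Jacobian setting. The one delicate point worth flagging is that the exponent $2/(n-1)$ in the conformal factor (and correspondingly in $d_{f,t}$) is the unique value for which the cross term produced by the product rule cancels the cross term produced by completing the square; this is exactly the quantitative feature that singles out $N=1$ in the $\kappa$-twisted framework and makes the whole machinery go through.
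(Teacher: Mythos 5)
Your proposal is correct and follows essentially the same route as the paper: apply the unweighted Riccati inequality to $h_{x}=l_{x}+f\circ\gamma_{x}$, complete the square so that the cross term $\frac{2\,l'_{x}f'_{x}}{n-1}$ produced by the conformal factor $e^{\frac{2f(\gamma_{x}(t))}{n-1}}$ cancels, and recognize the remaining $\ric_{g}+\Hess f+\frac{df\otimes df}{n-1}$ terms (using that $\gamma_{x}$ is a $\nabla$-geodesic) as $\ric^{1}_{f}(\gamma'_{x}(t))$. No gaps; your explicit remark on the sign of $-df\otimes df/(N-n)$ at $N=1$ and on the role of the exponent $2/(n-1)$ is consistent with the paper's computation.
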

\begin{proof}
Put $f_{x}:=f\circ \gamma$.
From Lemma \ref{lem:unweighted Riccati inequality}
we deduce
\begin{align*}
l''(t)=h''(t)-f''_{x}(t) & \leq -\frac{h'(t)^{2}}{n-1} -\left(\ric_{g}(\gamma'(t))+ f''_{x}(t)   \right)\\
                                         &   =   -\frac{l'(t)^{2}}{n-1}-\frac{2\,l'(t)\,f'_{x}(t)}{n-1} -\ric^{1}_{f}(\gamma'(t)).
\end{align*}
Hence
we have
\begin{align*}
e^{\frac{-2f_{x}(t)}{n-1}}\left(   e^{\frac{2f_{x}(t)}{n-1}} \,l'(t)    \right)'  &    =   l''(t)+\frac{2\,l'(t)\,f'_{x}(t)}{n-1} \leq -\frac{l'(t)^{2}}{n-1} -\ric^{1}_{f}(\gamma'(t)).
\end{align*}
This proves the desired inequality.
\end{proof}

\subsection{Jacobian inequalities}
We recall the following elementary comparison argument (see e.g., Theorem 14.28 in \cite{V}):
\begin{lem}\label{lem:comparison argument}
For $a>0$,
let $D:[0,a]\to \mathbb{R}$ be a non-negative continuous function
that is $C^{2}$ on $(0,a)$.
Take $\kappa \in \mathbb{R}$ and $\mathfrak{d} \geq 0$.
Assume $\kappa\, \mathfrak{d}^{2} \in (-\infty,a^{-2}\pi^{2})$.
Then $D''+\kappa\, \mathfrak{d}^{2} \,D \leq 0$ on $(0,a)$ if and only if 
for all $s_{0},s_{1} \in [0,a]$ and $\lambda \in [0,1]$,
\begin{equation*}
D\left((1-\lambda)s_{0}+\lambda s_{1}\right) \geq \frac{\mathfrak{s}_{\kappa}((1-\lambda)\,\vert s_{0}-s_{1} \vert\, \mathfrak{d})}{\mathfrak{s}_{\kappa}(\vert s_{0}-s_{1} \vert\, \mathfrak{d})} \,D(s_{0})
                                                                               + \frac{\mathfrak{s}_{\kappa}(\lambda\,\vert s_{0}-s_{1} \vert\, \mathfrak{d})}{\mathfrak{s}_{\kappa}(\vert s_{0}-s_{1} \vert\, \mathfrak{d})}\,D(s_{1}).
\end{equation*}
\end{lem}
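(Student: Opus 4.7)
The statement is a classical Sturm-type comparison: a non-negative $C^2$ function satisfies the Jacobi inequality $D''+\kappa d^2 D\leq 0$ exactly when, on every subinterval, it dominates the boundary-data-matching solution of the corresponding Jacobi equation. My plan is to prove the two implications separately.

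For the forward direction, fix $s_0,s_1\in[0,a]$ and assume without loss of generality that $s_0<s_1$; put $\ell:=s_1-s_0$. Define $G(\lambda):=D((1-\lambda)s_0+\lambda s_1)$ on $[0,1]$; by the chain rule and the standing hypothesis,
\[
G''(\lambda)+\kappa(\ell d)^2 G(\lambda)\leq 0,\qquad \lambda\in(0,1).
\]
Introduce the barrier
\[
\bar G(\lambda):=\frac{s_\kappa((1-\lambda)\ell d)}{s_\kappa(\ell d)}D(s_0)+\frac{s_\kappa(\lambda\ell d)}{s_\kappa(\ell d)}D(s_1),
\]
which is well defined because $\kappa d^2<a^{-2}\pi^2$ and $\ell\leq a$ ensure $\ell d<\pi/\sqrt{\kappa}$ when $\kappa>0$ (and there is nothing to check when $\kappa\leq 0$), so $s_\kappa(\ell d)>0$. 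Since $s_\kappa$ solves $\psi''+\kappa\psi=0$, a direct computation gives $\bar G''+\kappa(\ell d)^2\bar G=0$, and by construction $\bar G(0)=D(s_0)=G(0)$ and $\bar G(1)=D(s_1)=G(1)$. The desired inequality thus reduces to showing that $u:=G-\bar G\geq 0$ on $[0,1]$, where $u(0)=u(1)=0$ and $u''+\mu u\leq 0$ with $\mu:=\kappa(\ell d)^2<\pi^2$.

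If $\mu\leq 0$, then $u$ is concave (or one invokes the elementary maximum principle for $u''\leq -\mu u$), so the Dirichlet boundary conditions force $u\geq 0$. If $0<\mu<\pi^2$, I would pick a strictly positive solution of $\phi''+\mu\phi=0$ on $[0,1]$; concretely $\phi(\lambda):=\cos(\sqrt{\mu}(\lambda-1/2))$ is positive on $[0,1]$ precisely because $\sqrt{\mu}/2<\pi/2$. Writing $u=r\phi$, the inequality $u''+\mu u\leq 0$ is equivalent to $(r'\phi^2)'\leq 0$, so $r'\phi^2$ is non-increasing. Since $r(0)=r(1)=0$, Rolle's theorem produces an interior zero $\lambda_*$ of $r'$; monotonicity of $r'\phi^2$ together with $\phi>0$ then forces $r'\geq 0$ on $[0,\lambda_*]$ and $r'\leq 0$ on $[\lambda_*,1]$, so $r$ rises from $0$ to a maximum and returns to $0$. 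Hence $r\geq 0$ and $u=r\phi\geq 0$, as needed.

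For the converse, fix $s\in(0,a)$ and apply the assumed inequality with $s_0=s-\varepsilon$, $s_1=s+\varepsilon$, $\lambda=1/2$. Using $s_\kappa(t)=t-\kappa t^3/6+O(t^5)$ one obtains $s_\kappa(\varepsilon d)/s_\kappa(2\varepsilon d)=\tfrac{1}{2}+\tfrac{1}{4}\kappa d^2\varepsilon^2+O(\varepsilon^4)$, together with $D(s\pm\varepsilon)=D(s)\pm\varepsilon D'(s)+\tfrac{1}{2}\varepsilon^2 D''(s)+O(\varepsilon^3)$; substituting and subtracting $D(s)$ from both sides collapses the inequality to $0\geq \tfrac{1}{2}\varepsilon^2\bigl(D''(s)+\kappa d^2 D(s)\bigr)+O(\varepsilon^4)$, and dividing by $\varepsilon^2$ and letting $\varepsilon\downarrow 0$ yields the differential inequality. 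The only non-routine step is the positivity of $u$ in the regime $0<\mu<\pi^2$: the sharp Dirichlet eigenvalue threshold $\pi^2$, encoded in the standing hypothesis $\kappa d^2<a^{-2}\pi^2$, is indispensable there, for beyond it the maximum principle for $-u''-\mu u$ on $(0,1)$ fails and the comparison itself can break down. Everything else is routine calculus and the Jacobi equation for $s_\kappa$.
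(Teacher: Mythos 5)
Your proof is correct. Bear in mind that the paper does not prove this lemma at all: it is stated as an ``elementary comparison argument'' with a pointer to Theorem 14.28 in Villani's book, so there is no internal proof to compare with, and your argument is essentially the standard one behind that citation. In the forward direction you compare $G(\lambda)=D((1-\lambda)s_0+\lambda s_1)$ with the Jacobi solution $\bar G$ having the same boundary values and prove $G\geq\bar G$ by a Sturm-type argument, the substitution $u=r\phi$ with $\phi(\lambda)=\cos\bigl(\sqrt{\mu}\,(\lambda-\tfrac12)\bigr)$ being legitimate precisely because $\mu=\kappa(\ell d)^2\leq\kappa a^2 d^2<\pi^2$, which is where the standing hypothesis enters; in the converse direction you expand at the midpoint, which is exactly how one recovers the differential inequality. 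Three cosmetic remarks. First, for $\mu<0$ the function $u$ need not be concave, but your parenthetical maximum-principle argument (at an interior negative minimum one would have $u''\geq0$ while $u''\leq-\mu u<0$) is the correct fix, so nothing is lost. Second, since $D$ is only assumed $C^2$, the expansions $D(s\pm\varepsilon)=D(s)\pm\varepsilon D'(s)+\tfrac12\varepsilon^2D''(s)+o(\varepsilon^2)$ carry Peano remainders $o(\varepsilon^2)$ rather than $O(\varepsilon^3)$; the limit argument is unaffected. Third, in the degenerate case $|s_0-s_1|\,d=0$ the quotients in the statement are to be read as their limits $1-\lambda$ and $\lambda$, so the inequality reduces to concavity, which is covered by your $\mu\leq0$ case; it is worth saying this explicitly since the lemma is later applied with $d=d(x,y)$, which can vanish.
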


We define a function $D:[0,1]\to \mathbb{R}$ by
\begin{equation*}
D(t):=\exp \left(\frac{l(t)}{n-1} \right).
\end{equation*}

Lemmas \ref{lem:Riccati inequality} and \ref{lem:comparison argument} yield the following concavity of the function $D$:
\begin{lem}\label{lem:pre-Jacobian inequality}
For $\kappa \in \mathbb{R}$,
if $\ric^{1}_{f} \geq (n-1)\kappa\,e^{\frac{-4f}{n-1}}$,
then for every $t \in (0,1)$
we have
\begin{equation*}
D(t) \geq  \frac{\mathfrak{s}_{\kappa}(d_{f,1-t}(F_{1}(x),x))}{\mathfrak{s}_{\kappa}(d_{f}(F_{1}(x),x))}\,D(0)+\frac{\mathfrak{s}_{\kappa}(d_{f,t}(x,F_{1}(x)))}{\mathfrak{s}_{\kappa}(d_{f}(x,F_{1}(x)))} \, D_{x}(1).
\end{equation*}
\end{lem}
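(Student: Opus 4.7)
The strategy is to turn the Riccati inequality from Lemma \ref{lem:Riccati inequality} into a differential inequality of the form $\tilde{D}'' + \kappa \tilde{D} \leq 0$ for a suitable re-parametrization $\tilde{D}$ of $D_{x}$, and then invoke the elementary comparison Lemma \ref{lem:comparison argument}. The re-parametrization must absorb the factor $e^{-4f/(n-1)}$ in the curvature hypothesis, and the natural choice is exactly the one defining $d_{f,t}$ in (\ref{eq:weighted distance function}).

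\textbf{Step 1: Set up the re-parametrization.} Set $d:=d(x,F_{1}(x))$ and let $\sigma:[0,d]\to M$ be the unit speed minimal geodesic from $x$ to $F_{1}(x)$ (unique since $F_{1}(x)\notin \cut x$ by Theorem \ref{thm:Jacobian equation}), so that $\gamma_{x}(t)=\sigma(td)$ has constant speed $d$. Define $u:[0,1]\to [0,S_{f}]$ by
\[
u(t):=d\int_{0}^{t} e^{\frac{-2f(\gamma_{x}(\tau))}{n-1}}\,d\tau,\qquad S_{f}:=u(1).
\]
A change of variable $\eta=\tau d$ shows $u(t)=d_{f,t}(x,F_{1}(x))$ and $S_{f}=d_{f}(x,F_{1}(x))$; moreover, reversing $\sigma$ and a further change of variable give $S_{f}-u(t)=d_{f,1-t}(F_{1}(x),x)$ and $d_{f}(F_{1}(x),x)=S_{f}$. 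Let $t=t(s)$ be the inverse of $u$ and set $\tilde{D}(s):=D_{x}(t(s))$ on $[0,S_{f}]$.

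\textbf{Step 2: Derive $\tilde{D}''+\kappa \tilde{D}\leq 0$.} Writing $f_{x}:=f\circ\gamma_{x}$ and using $ds/dt=d\,e^{-2f_{x}/(n-1)}$, a direct computation yields
\[
\tilde{D}''(s)=\frac{e^{4f_{x}(t)/(n-1)}}{d^{2}}\left(D_{x}''(t)+\frac{2f_{x}'(t)}{n-1}D_{x}'(t)\right).
\]
Since $D_{x}=e^{l_{x}/(n-1)}$ gives $D_{x}'=\frac{l_{x}'}{n-1}D_{x}$ and $D_{x}''=\frac{1}{n-1}\bigl(l_{x}''+\frac{(l_{x}')^{2}}{n-1}\bigr)D_{x}$, the bracket becomes $\frac{D_{x}}{n-1}\bigl(l_{x}''+\frac{(l_{x}')^{2}}{n-1}+\frac{2 l_{x}' f_{x}'}{n-1}\bigr)$. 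The proof of Lemma \ref{lem:Riccati inequality} shows precisely that this last expression is bounded above by $-\ric^{1}_{f}(\gamma_{x}'(t))$. The curvature hypothesis together with $\Vert\gamma_{x}'(t)\Vert=d$ gives $\ric^{1}_{f}(\gamma_{x}'(t))\geq (n-1)\kappa\,e^{-4f_{x}(t)/(n-1)}d^{2}$, so that
\[
\tilde{D}''(s)\leq \frac{e^{4f_{x}/(n-1)}}{d^{2}}\cdot\frac{D_{x}}{n-1}\cdot\bigl(-(n-1)\kappa\, e^{-4f_{x}/(n-1)}d^{2}\bigr)=-\kappa\,\tilde{D}(s).
\]

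\textbf{Step 3: Invoke Lemma \ref{lem:comparison argument}.} Applying Lemma \ref{lem:comparison argument} with $a=S_{f}$, $d=1$, $s_{0}=0$, $s_{1}=S_{f}$, and $\lambda=u(t)/S_{f}$ (the required range condition $\kappa S_{f}^{2}<\pi^{2}$ for $\kappa>0$ follows from Theorem \ref{thm:cut value estimate}; the borderline case is covered by the convention $\beta_{\kappa,f,t}=\infty$ when $d_{f}\geq C_{\kappa}$) yields
\[
\tilde{D}(u(t))\geq \frac{s_{\kappa}(S_{f}-u(t))}{s_{\kappa}(S_{f})}\tilde{D}(0)+\frac{s_{\kappa}(u(t))}{s_{\kappa}(S_{f})}\tilde{D}(S_{f}).
\]
Substituting the identifications from Step 1, namely $u(t)=d_{f,t}(x,F_{1}(x))$, $S_{f}-u(t)=d_{f,1-t}(F_{1}(x),x)$, $S_{f}=d_{f}(x,F_{1}(x))=d_{f}(F_{1}(x),x)$, $\tilde{D}(u(t))=D_{x}(t)$, $\tilde{D}(0)=D_{x}(0)$, $\tilde{D}(S_{f})=D_{x}(1)$, produces the claimed inequality.

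\textbf{Main obstacle.} The only real subtlety is bookkeeping: correctly identifying the parameter $u(t)$ with $d_{f,t}(x,F_{1}(x))$ and its complement with $d_{f,1-t}(F_{1}(x),x)$ (which uses that the unique minimizing geodesic from $F_{1}(x)$ to $x$ is the reversal of $\sigma$), and ensuring that the exponential factor produced by the chain rule precisely cancels the factor $e^{-4f/(n-1)}$ in the curvature hypothesis so as to leave a clean $-\kappa\tilde{D}$ on the right-hand side. Everything else is formal manipulation of Lemmas \ref{lem:Riccati inequality} and \ref{lem:comparison argument}.
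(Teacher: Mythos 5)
Your proof is correct and follows essentially the same route as the paper: reparametrize $D_{x}$ by the weighted arc-length so that the Riccati inequality of Lemma \ref{lem:Riccati inequality} turns into $\tilde{D}''+\kappa\,\tilde{D}\leq 0$, apply Lemma \ref{lem:comparison argument}, and translate back using the uniqueness of the geodesic (the paper keeps the unscaled parameter and applies the comparison lemma with $d=d(x,F_{1}(x))$ instead of $d=1$, which is only cosmetic). The one point to phrase more carefully is the strict bound $\kappa\,d_{f}(x,F_{1}(x))^{2}<\pi^{2}$: it does not come from the convention $\beta_{\kappa,f,t}=\infty$, but from $d_{f}(x,F_{1}(x))<\tau_{f,x}\left(\gamma_{x}'(0)/\Vert\gamma_{x}'(0)\Vert\right)\leq \pi/\sqrt{\kappa}$, which uses $F_{1}(x)\notin\cut x$ together with Theorem \ref{thm:cut value estimate}, exactly as in the paper's proof.
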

\begin{proof}
We define a function $s_{f}:[0,1]\to \mathbb{R}$ by
\begin{equation*}
s_{f}(t):=\int^{t}_{0}\,e^{\frac{-2f(\gamma(\xi))}{n-1}}\,d\xi.
\end{equation*}
Put $a:=s_{f}(1)$,
and let $t_{f}:[0,a]\to [0,1]$ be the inverse function of $s_{f}$.
Define functions $\widehat{l},\, \widehat{D}:[0,a]\to \mathbb{R}$ by
\begin{equation*}
\widehat{l}:=l\circ t_{f},\quad \widehat{D}:=D \circ t_{f}.
\end{equation*}
For each $s \in (0,a)$
we see
\begin{equation}\label{eq:Riccati equation}
(n-1)\frac{\widehat{D}''(s)}{\widehat{D}(s)} =     \widehat{l}''(s)+\frac{\widehat{l}'(s)^{2}}{n-1}.
\end{equation}
We also define functions $L:[0,1]\to \mathbb{R}$ and $\widehat{L}:[0,a]\to \mathbb{R}$ by 
\begin{equation*}
L(t):=e^{\frac{2f(\gamma(t))}{n-1}}\,l'(t),\quad \widehat{L}:=L \circ t_{f}.
\end{equation*}
Note that $\widehat{l}'(s)=\widehat{L}(s)$.
From Lemma \ref{lem:Riccati inequality},
it follows that
\begin{align}\label{eq:application of Riccati inequality}
\widehat{l}''(s)   &    =    \widehat{L}'(s)=t'_{f}(s)\,L'(t_{f}(s))\\ \notag
                        &   \leq  -e^{\frac{4f\left(\gamma\left(t_{f}(s)\right)\right)}{n-1}}\left(      \frac{ l'(t_{f}(s))^{2}}{n-1}+\ric^{1}_{f}(\gamma'(t_{f}(s))) \right)\\ \notag
                        &     =   -\frac{\widehat{l}'(s)^{2}}{n-1}-e^{\frac{4f\left(\gamma\left(t_{f}(s)\right)   \right)}{n-1}}  \,\ric^{1}_{f}(\gamma'(t_{f}(s))). \notag
\end{align}
Combining (\ref{eq:Riccati equation}) and (\ref{eq:application of Riccati inequality}),
we obtain
\begin{equation*}
(n-1)\frac{\widehat{D}''(s)}{\widehat{D}(s)} \leq -e^{\frac{4f\left(\gamma\left(t_{f}(s)\right)\right)}{n-1}}  \,\ric^{1}_{f}(\gamma'(t_{f}(s)))\leq  -(n-1)\,\kappa\,d(x,y)^{2},
\end{equation*}
where $y:=F_{1}(x)$.
Therefore,
$\widehat{D}''+\kappa\, d(x,y)^{2}\,  \widehat{D} \leq 0$ on $(0,a)$.

Since the Kantorovich potential $\phi$ is twice differentiable at $x$,
the curve $\gamma$ lies in the complement of $\cut x$ in view of Theorem \ref{thm:Jacobian equation}.
In particular,
$\gamma$ is a unique minimal geodesic from $x$ to $y$,
and hence
\begin{equation*}\label{eq:twisted distance function representation}
a\,d(x,y)=d_{f}(x,y)<\tau_{f}\left(\frac{\gamma'(0)}{\Vert \gamma'(0)\Vert}\right).
\end{equation*}
By Theorem \ref{thm:cut value estimate},
$\kappa\, d(x,y)^{2} \in (-\infty,a^{-2}\pi^{2})$.
Lemma \ref{lem:comparison argument} implies that
for all $s_{0},s_{1} \in [0,a]$ and $\lambda \in [0,1]$
\begin{equation}\label{eq:use of comparison argument}
\widehat{D}\left((1-\lambda)s_{0}+\lambda s_{1}\right) \geq \frac{\mathfrak{s}_{\kappa}((1-\lambda)\,\vert s_{0}-s_{1} \vert\, d(x,y))}{\mathfrak{s}_{\kappa}(\vert s_{0}-s_{1} \vert\, d(x,y))} \,\widehat{D}(s_{0})
                                                                                          +    \frac{\mathfrak{s}_{\kappa}(\lambda\,\vert s_{0}-s_{1} \vert\, d(x,y))}{\mathfrak{s}_{\kappa}(\vert s_{0}-s_{1} \vert\, d(x,y))}\,\widehat{D}(s_{1}).
\end{equation}
For every $s\in (0,a)$
we obtain
\begin{equation*}
\widehat{D}(s) \geq  \frac{\mathfrak{s}_{\kappa}((a-s)\,d(x,y))}{\mathfrak{s}_{\kappa}(a\,d(x,y))}\,  \widehat{D}(0)+  \frac{\mathfrak{s}_{\kappa}(s\,d(x,y))}{\mathfrak{s}_{\kappa}(a\,d(x,y))} \, \widehat{D}(a)
\end{equation*}
by letting $s_{0}\to 0,\,s_{1}\to a$ and $\lambda \to s/a$ in (\ref{eq:use of comparison argument}).
For every $t\in (0,1)$
\begin{equation*}
D(t) \geq  \frac{\mathfrak{s}_{\kappa}((a-s_{f}(t))\,d(x,y))}{\mathfrak{s}_{\kappa}(a\,d(x,y))}\,D(0)+  \frac{\mathfrak{s}_{\kappa}(s_{f}(t)\,d(x,y))}{\mathfrak{s}_{\kappa}(a\,d(x,y))} \,D(1).
\end{equation*}
From the uniqueness of the geodesic $\gamma$ between $x$ and $y$,
for every $t\in [0,1]$
we see
\begin{equation*}
\left(a-s_{f}(t)\right)\,d(x,y)=d_{f,1-t}(y,x),\quad s_{f}(t)\,d(x,y)=d_{f,t}(x,y).
\end{equation*}
This completes the proof.
\end{proof}

Now,
we prove Proposition \ref{prop:Jacobian inequality}.
\begin{proof}[Proof of Proposition \ref{prop:Jacobian inequality}]
For $\kappa \in \mathbb{R}$,
we assume $\ric^{1}_{f} \geq (n-1)\kappa\,e^{\frac{-4f}{n-1}}$.
Define a function $\overline{D}:[0,1]\to \mathbb{R}$ by
\begin{equation*}
\overline{D}(t):=\exp \left( \int^{t}_{0}\,a_{nn}(\xi)\,d\xi \right),
\end{equation*}
where $a_{nn}$ is determined by (\ref{eq:Jacobi field}).
The following concavity is well-known (see e.g., (1.10) in \cite{St3}, and (14.19) in \cite{V}):
\begin{equation}\label{eq:concavity}
\overline{D}(t) \geq  (1-t)\,  \overline{D}(0)+t \, \overline{D}(1)
\end{equation}
for every $t \in (0,1)$.
By Lemma \ref{lem:pre-Jacobian inequality}, (\ref{eq:concavity}) and the H\"older inequality,
we obtain
\begin{align*}\label{eq:proof of Jacobian inequality}
J_{t}(x)^{\frac{1}{n}} 
&= D(t)^{1-\frac{1}{n}}\,\overline{D}(t)^{\frac{1}{n}}\\
&\geq (1-t)^{\frac{1}{n}}\,\left(\frac{\mathfrak{s}_{\kappa}((d_{f,1-t}(F_{1}(x),x))}{\mathfrak{s}_{\kappa}(d_{f}(F_{1}(x),x))}\right)^{1-\frac{1}{n}}\, J_{0}(x)^{\frac{1}{n}} \\ \notag
                       &\qquad \,\,\,\,    +  t^{\frac{1}{n}}\,\left(\frac{\mathfrak{s}_{\kappa}(d_{f,t}(x,F_{1}(x)))}{\mathfrak{s}_{\kappa}(d_{f}(x,F_{1}(x)))}\right)^{1-\frac{1}{n}} \, J_{1}(x)^{\frac{1}{n}}.
\end{align*}
The right hand side is equal to that of the desired one.
Therefore,
we conclude the proposition.
\end{proof}

\section{Displacement convexity}\label{sec:Displacement convexity}
In this section,
we prove Theorem \ref{thm:displacement convexity} by using Proposition \ref{prop:Jacobian inequality} along the line of the proof of Theorem 1.7 in \cite{St3}.

\subsection{Curvature bounds imply displacement convexity}
First,
we prove the implication from \ref{enum:curv cond} to \ref{enum:twisted curv} in Theorem \ref{thm:displacement convexity}.
Precisely,
we show:
\begin{prop}\label{prop:only if part}
For $\kappa \in \mathbb{R}$,
if $\ric^{1}_{f} \geq (n-1)\kappa\,e^{\frac{-4f}{n-1}}$,
then $(M,d,m)$ has $\kappa$-twisted curvature bound.
\end{prop}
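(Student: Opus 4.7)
The plan is to run the standard optimal-transport displacement-convexity argument (in the spirit of Sturm \cite{St3}, Lott--Villani \cite{LV2}, and Cordero-Erausquin--McCann--Schmuckenschl\"ager \cite{CMS}), with Proposition \ref{prop:Jacobian inequality} supplying the twisted Jacobian concavity that replaces the classical one. Given $\mu_{0}, \mu_{1} \in P^{ac}_{c}(M)$ with densities $\rho_{0}, \rho_{1}$, I would first invoke the Brenier--McCann theorem (Theorem \ref{thm:McCann theorem}) to obtain a $c$-concave potential $\phi$ such that $F(z) := \exp_{z}(-\nabla \phi(z))$ is the unique optimal transport map from $\mu_{0}$ to $\mu_{1}$ and $\pi := (\Id_{M} \times F)_{\#}\mu_{0}$ is the unique optimal coupling. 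Setting $F_{t}(z) := \exp_{z}(-t \nabla \phi(z))$ and $\mu_{t} := (F_{t})_{\#} \mu_{0}$, Proposition \ref{lem:absolutely continuous} identifies $(\mu_{t})_{t \in [0,1]}$ as the unique $W_{2}$-geodesic from $\mu_{0}$ to $\mu_{1}$, and this choice of $\pi$ and $(\mu_{t})$ is independent of $U \in \mathcal{DC}$, so it suffices to verify $(\ref{eq:lower twisted curvature bound})$ for an arbitrary fixed $U$.

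Next, I would re-express $U_{m}(\mu_{t})$ in a form that exposes the weighted Jacobian $J_{t}(x) := e^{-f(F_{t}(x))+f(x)} \det(dF_{t})_{x}$ from Proposition \ref{prop:Jacobian inequality}. Theorem \ref{thm:Jacobian equation} supplies a set of full $\mu_{0}$-measure on which $\phi$ is twice differentiable, every $\det(dF_{t})_{x}$ is positive, and the weighted change-of-variable identity $\rho_{0}(x) = \rho_{t}(F_{t}(x))\, J_{t}(x)$ holds (applying the theorem with $\mu_{t}$ in place of $\mu_{1}$ and $t\phi$ in place of $\phi$, noting that $t \phi$ is $c$-concave and $(\mu_{t}) \subset P^{ac}_{c}(M)$ by Proposition \ref{lem:absolutely continuous}). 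Pushing forward through $F_{t}$ and using $\varphi_{U}(r) = r^{n} U(r^{-n})$ then yields
\begin{equation*}
U_{m}(\mu_{t}) \;=\; \int_{M} \varphi_{U}\!\left( (J_{t}(x)/\rho_{0}(x))^{1/n} \right) d\mu_{0}(x).
\end{equation*}
Since $F_{0} = \Id_{M}$ forces $J_{0} \equiv 1$ and the $t=1$ instance of Theorem \ref{thm:Jacobian equation} gives $J_{1}(x) = \rho_{0}(x)/\rho_{1}(F(x))$, one obtains the clean identifications $(J_{0}/\rho_{0})^{1/n}(x) = \rho_{0}(x)^{-1/n}$ and $(J_{1}/\rho_{0})^{1/n}(x) = \rho_{1}(F(x))^{-1/n}$.

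Then I would apply Proposition \ref{prop:Jacobian inequality} pointwise: writing $y := F(x)$,
\begin{equation*}
(J_{t}/\rho_{0})^{1/n}(x) \;\geq\; (1-t)\, \overline{\beta}_{\kappa,f,1-t}(x,y)^{1/n}\, \rho_{0}(x)^{-1/n} + t\, \beta_{\kappa,f,t}(x,y)^{1/n}\, \rho_{1}(y)^{-1/n}.
\end{equation*}
By the definition of $\mathcal{DC}$ the function $\varphi_{U}$ is convex; moreover $U(0)=0$ together with convexity of $U$ makes $s \mapsto U(s)/s$ non-decreasing, whence $\varphi_{U}(r) = U(r^{-n})/r^{-n}$ is non-increasing. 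Applying $\varphi_{U}$ (which reverses the inequality above) and then its convexity with weights $(1-t)$ and $t$ produces
\begin{equation*}
\varphi_{U}\!\left((J_{t}/\rho_{0})^{1/n}\right) \leq (1-t)\, \varphi_{U}\!\left((\overline{\beta}_{\kappa,f,1-t}(x,y)/\rho_{0}(x))^{1/n}\right) + t\, \varphi_{U}\!\left((\beta_{\kappa,f,t}(x,y)/\rho_{1}(y))^{1/n}\right).
\end{equation*}
Unpacking $\varphi_{U}(r) = r^{n} U(r^{-n})$ rewrites each right-hand term as the integrand appearing in $(\ref{eq:lower twisted curvature bound})$, so integrating against $d\mu_{0}$ and converting $\int g(x, F(x))\, d\mu_{0}(x) = \int g(x,y)\, d\pi(x,y)$ delivers $(\ref{eq:lower twisted curvature bound})$ precisely.

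The only real difficulty is bookkeeping: securing all pointwise identities and inequalities on a single set of full $\mu_{0}$-measure, and handling the case $\kappa > 0$ where $\beta_{\kappa,f,t}$ or $\overline{\beta}_{\kappa,f,t}$ could in principle equal $\infty$. For the latter, Theorem \ref{thm:cut value estimate} forces $d_{f}(x,y) \leq \pi/\sqrt{\kappa} = C_{\kappa}$, and the boundary locus $\{d_{f}(x,y) = C_{\kappa}\}$ is excluded on the good set (where $F(x) \notin \cut x$), so the twisted coefficients are finite $\pi$-almost everywhere and the convention $\beta = \infty$ in Definition \ref{defi:twisted curvature bound} never comes into play.
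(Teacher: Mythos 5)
Your proposal is correct and follows essentially the same route as the paper's proof: Brenier--McCann for the optimal map and coupling, Theorem \ref{thm:Jacobian equation} for the full-measure set with the change-of-variable identities, Proposition \ref{prop:Jacobian inequality} pointwise, then monotonicity and convexity of $\varphi_{U}$ followed by the pushforward identity $\pi=(\Id_{M}\times F)_{\#}\mu_{0}$. Your extra remarks (the explicit identification $J_{0}\equiv 1$, the justification that $\varphi_{U}$ is non-increasing, and the finiteness of the twisted coefficients for $\kappa>0$, which the paper handles inside the proof of Lemma \ref{lem:pre-Jacobian inequality} via Theorem \ref{thm:cut value estimate}) only make explicit what the paper leaves implicit.
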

\begin{proof}
Let $\mu,\nu \in P^{ac}_{2}(M)$ be disjointly supported,
and let $\phi$ denote the Kantorovich potential obtained in Theorem \ref{thm:McCann theorem}.
The map $F_{t}$ on $M$ defined as (\ref{eq:interpolation map}) gives a unique optimal coupling $\pi$ of $(\mu,\nu)$ via $\pi:=(F_{0}\times F_{1})_{\#}\mu$.
Furthermore,
it determines a unique minimal geodesic $(\mu_{t})_{t\in [0,1]}$ in $(P_{2}(M),W_{2})$ from $\mu$ to $\nu$ via $\mu_{t}:=(F_{t})_{\#}\mu_{0}$,
which lies in $P^{ac}_{2}(M)$.
Moreover,
in virtue of Theorem \ref{thm:Jacobian equation},
for a fixed $t\in (0,1)$,
the Jacobian equations 
\begin{equation}\label{eq:Jacobian equation}
\rho_{0}(x)=\rho_{1}(F_1(x))\,J_{1}(x)=\rho_{t}(F_{t}(x))\,J_{t}(x)
\end{equation}
hold for $\mu_{0}$-almost every $x\in M$,
where $\rho_{t}$ is the density of $\mu_{t}$ with respect to $m$.

For $U\in \mathcal{DC}$,
let $\varphi_{U}$ be the function defined as $\varphi_{U}(r):=r^{n}\,U(r^{-n})$,
which is non-increasing and convex.
By using (\ref{eq:Jacobian equation}) and $\mu_{t}=(F_{t})_{\#}\mu_{0}$,
by the properties of $\varphi_{U}$ and Proposition \ref{prop:Jacobian inequality},
and by using (\ref{eq:Jacobian equation}) again,
\begin{align*}
U_{m}(\mu_{t})&=\int_{M}\,U\left(\frac{\rho_{0}(x)}{J_{t}(x) }\right)\frac{J_{t}(x)}{\rho_{0}(x)}\,d\mu_{0}(x)=\int_{M}\,\varphi_{U}\left(  \left(\frac{J_{t}(x)}{\rho_{0}(x)}\right)^{\frac{1}{n}} \right)\,d\mu_{0}(x)\\
                        & \leq (1-t)\,\int_{M}\,\varphi_{U} \left(    \beta_{\kappa,f,1-t}(F_{1}(x),x)^{\frac{1}{n}} \,  \left(\frac{J_{0}(x)}{\rho_{0}(x)}\right)^{\frac{1}{n}}  \,\right)\,d\mu_{0}(x)\\ \notag
                       &\qquad \,\,\, +t\,\int_{M}\,\varphi_{U} \left(  \beta_{\kappa,f,t}(x,F_{1}(x))^{\frac{1}{n}}  \,  \left(\frac{J_{1}(x)}{\rho_{0}(x)}\right)^{\frac{1}{n}} \,\right)\,d\mu_{0}(x)\\
                        & \leq (1-t)\,\int_{M}\,\varphi_{U} \left(  \left(  \frac{     \beta_{\kappa,f,1-t}(F_{1}(x),x) }{\rho_{0}(x) }   \right)^{\frac{1}{n}}  \right)\,d\mu_{0}(x)\\ \notag
                        &\qquad \,\,\, +t\,\int_{M}\,\varphi_{U} \left(   \left(\frac{ \beta_{\kappa,f,t}(x,F_{1}(x))}{  \rho_{1}(F_{1}(x)) }  \right)^{\frac{1}{n}} \right)\,d\mu_{0}(x).
\end{align*}
Since $\pi=\left( F_{0}\times F_{1}  \right)_{\#}\mu_{0}$,
the right hand side of the above inequality is equal to that of the desired one.
We complete the proof.
\end{proof}

\begin{rem}\label{rem:Jacobian inequality for density}
Under the same setting and notation as in the above proof,
we also see the following inequality by combining Proposition \ref{prop:Jacobian inequality} and (\ref{eq:Jacobian equation}):
For each fixed $t\in (0,1)$
we have
\begin{equation}\label{eq:Jacob inequ density}
\frac{1}{\rho_t(F_t(x))^{\frac{1}{n}} }\geq (1-t)\, \left(\frac{\beta_{\kappa,f,1-t}(F_{1}(x),x)}{\rho_0(x)}\right)^{\frac{1}{n}}+t\,  \left(\frac{\beta_{\kappa,f,t}(x,F_{1}(x))}{\rho_1(F_1(x)) }\right)^{\frac{1}{n}}
\end{equation}
for $\mu_{0}$-almost every $x\in M$.
\end{rem}

\subsection{Displacement convexity implies curvature bounds}
In Theorem \ref{thm:displacement convexity},
the implication from \ref{enum:twisted curv} to \ref{enum:relaxed twisted curv} is trivial.
Hence,
to conclude the desired assertion,
it suffices to prove the one from \ref{enum:relaxed twisted curv} to \ref{enum:curv cond}.

For subsets $X,Y \subset M$ and $t \in [0,1]$,
let $Z_{t}(X,Y)$ be the set of all points $\gamma(t)$,
where $\gamma:[0,1]\to M$ is a minimal geodesic with $\gamma(0) \in X,\,\gamma(1) \in Y$.
To prove the implication from \ref{enum:relaxed twisted curv} to \ref{enum:curv cond},
we prepare the following lemma,
which claims that
the relaxed twisted curvature bound implies the inequality of Brunn-Minkowski type.
\begin{lem}\label{lem:Brunn-Minkowski type inequality}
Let $X,\,Y \subset M$ denote two disjoint, bounded Borel subsets with $m(X),\,m(Y)\in (0,\infty)$.
For $\kappa \in \mathbb{R}$,
if $(M,d,m)$ has $\kappa$-relaxed twisted curvature bound,
then for every $t\in (0,1)$
we have
\begin{align}\label{eq:Brunn-Minkowski type inequality}
m \left(  Z_{t}(X,Y)   \right)^{\frac{1}{n}} \geq (1-t)\, &\left(\inf_{(x,y)\in X \times Y}\,\beta_{\kappa,f,1-t}(y,x)^{\frac{1}{n}} \right)  \,  m(X)^{\frac{1}{n}}\\
                                                                        +\: t\, &\left(\inf_{(x,y)\in X \times Y}\,\beta_{\kappa,f,t}(x,y)^{\frac{1}{n}}   \right)\,  m(Y)^{\frac{1}{n}}. \notag
\end{align}
\end{lem}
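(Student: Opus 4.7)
The plan is to apply the $\kappa$-weak twisted curvature bound (Definition~\ref{defi:weak twisted curvature bound}) to the normalized indicator measures of $X$ and $Y$ and to extract the claimed estimate by testing the defining convexity inequality against the R\'enyi entropy $H(r)=n\,r(1-r^{-1/n})$. First, I would set
\begin{equation*}
\mu_{0}:=\frac{1}{m(X)}\,\mathbf{1}_{X}\,m,\qquad \mu_{1}:=\frac{1}{m(Y)}\,\mathbf{1}_{Y}\,m,
\end{equation*}
which lie in $P^{ac}_{c}(M)$ since $X,Y$ are bounded (so $\overline{X},\overline{Y}$ are compact by completeness of $M$). The hypothesis then provides an optimal coupling $\pi$ concentrated on $X\times Y$ and a minimal geodesic $(\mu_{t})_{t\in [0,1]}$ for which (\ref{eq:lower twisted curvature bound}) holds with $U=H$.

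The next step is to bound each side of (\ref{eq:lower twisted curvature bound}) explicitly. By Proposition~\ref{lem:absolutely continuous} the interpolant $\mu_{t}$ is the pushforward of $\mu_{0}$ along an optimal transport map $F_{t}$ that sends each $x\in X$ to the time-$t$ point of the minimal geodesic joining $x$ to the corresponding point of $Y$; hence $\mu_{t}$ is concentrated on $Z_{t}(X,Y)$, and H\"older's inequality with exponents $n/(n-1)$ and $n$ applied to $\int \rho_{t}^{1-1/n}\,dm$ gives
\begin{equation*}
H_{m}(\mu_{t})=n-n\int_{Z_{t}(X,Y)}\rho_{t}^{1-\frac{1}{n}}\,dm\geq n-n\,m(Z_{t}(X,Y))^{\frac{1}{n}}.
\end{equation*}
For the right-hand side, the identity $H(r/s)\,s/r=n-n(s/r)^{1/n}$ together with $\rho_{0}\equiv m(X)^{-1}$ on $X$ and $\rho_{1}\equiv m(Y)^{-1}$ on $Y$ converts the right-hand side of (\ref{eq:lower twisted curvature bound}) into
\begin{align*}
n &- n(1-t)\,m(X)^{\frac{1}{n}}\int_{X\times Y}\overline{\beta}_{\kappa,f,1-t}(x,y)^{\frac{1}{n}}\,d\pi(x,y) \\
&\quad - nt\,m(Y)^{\frac{1}{n}}\int_{X\times Y}\beta_{\kappa,f,t}(x,y)^{\frac{1}{n}}\,d\pi(x,y),
\end{align*}
which is bounded above by replacing each integral with the infimum of its integrand over $X\times Y$.

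Finally, I would chain the lower bound for $H_{m}(\mu_{t})$ with the resulting upper bound and rearrange to obtain (\ref{eq:Brunn-Minkowski type inequality}). The only delicate point in this scheme is the concentration assertion $\mu_{t}(Z_{t}(X,Y))=1$, which is what lets me carry the H\"older inequality over from $M$ to $Z_{t}(X,Y)$; this, however, follows directly from Proposition~\ref{lem:absolutely continuous} since $F_{t}(x)$ lies on the minimal geodesic joining $x\in X$ to $F_{1}(x)\in Y$. Everything else is bookkeeping, so I do not anticipate a serious obstacle beyond arranging the algebra.
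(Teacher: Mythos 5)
Your proposal is correct and follows essentially the same route as the paper: take the normalized indicator measures, apply the $\kappa$-weak twisted curvature bound with the R\'enyi entropy $H$, bound $\int\rho_{t}^{1-1/n}\,dm$ by $m(Z_{t}(X,Y))^{1/n}$ via H\"older/Jensen (using that the displacement interpolation is concentrated on $Z_{t}(X,Y)$, which the paper also implicitly uses), and then replace the integrals of $\overline{\beta}_{\kappa,f,1-t}^{1/n}$ and $\beta_{\kappa,f,t}^{1/n}$ by their infima since $\pi$ is supported on $X\times Y$. The only cosmetic difference is that you keep the constants $n-n(\cdot)$ from $H$ explicit, whereas the paper cancels them and works directly with the inequality for $\int\rho_{t}^{1-1/n}\,dm$.
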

\begin{proof}
Let $1_{X}$ and $1_{Y}$ be the characteristic functions of $X$ and of $Y$,
respectively.
We set
\begin{equation*}
\rho_{0}:=\frac{1_{X}}{m(X)},\quad \mu_{0}:=\rho_{0}\,m,\quad \rho_{1}:=\frac{1_{Y}}{m(Y)},\quad \mu_{1}:=\rho_{1}\,m.
\end{equation*}
Let $(\mu_{t})_{t\in [0,1]}$ denote a unique minimal geodesic in $(P_{2}(M),W_{2})$ from $\mu_{0}$ to $\mu_{1}$,
which lies in $P^{ac}_{2}(M)$.
Notice that
$\mu_{t}$ is supported on $Z_{t}(X,Y)$ for each $t \in (0,1)$ since it is written in the form of $\mu_{t}=(F_{t})_{\#}\mu_{0}$ for the map $F_t$ defined as (\ref{eq:interpolation map}),
and $\mu_0$ and $\mu_1$ are supported on $X$ and $Y$,
respectively.
Let $\rho_{t}$ stand for the density of $\mu_{t}$ with respect to $m$.
From the Jensen inequality
one can derive
\begin{equation*}
m \left(  Z_{t}(X,Y)   \right)^{\frac{1}{n}} \geq \int_{M}\,\rho_{t}(x)^{1-\frac{1}{n}}\,dm(x).
\end{equation*}
Since $(M,d,m)$ has $\kappa$-relaxed twisted curvature bound,
we have
\begin{align}\label{eq:proof of Brunn-Minkowski inequality}
\int_{M}\,\rho_{t}(x)^{1-\frac{1}{n}}\,dm(x) \geq (1-t)&\,\int_{M\times M}\, \rho_{0}(x)^{-\frac{1}{n}} \,\beta_{\kappa,f,1-t}(y,x)^{\frac{1}{n}}\, d\pi(x,y)\\
                                                                              +\:t&\,\int_{M\times M}\,   \rho_{1}(y)^{-\frac{1}{n}} \,\beta_{\kappa,f,t}(x,y)^{\frac{1}{n}}\, d\pi(x,y), \notag
\end{align}
where $\pi$ is a unique optimal coupling of $(\mu_{0},\mu_{1})$.
Now,
$\pi$ is supported on $X\times Y$ since it is the coupling,
and $\mu_0$ and $\mu_1$ are supported on $X$ and $Y$,
respectively.
Hence,
the right hand side of (\ref{eq:proof of Brunn-Minkowski inequality}) is bounded from below by
\begin{align*}
(1-t)\, &\left(\inf_{(x,y)\in X \times Y}\,\beta_{\kappa,f,1-t}(y,x)^{\frac{1}{n}} \right)  \, \int_{M}\,\rho_{0}(x)^{1-\frac{1}{n}}\,dm(x)\\
+\:t\, &\left(\inf_{(x,y)\in X \times Y}\,\beta_{\kappa,f,t}(x,y)^{\frac{1}{n}}   \right)\,  \int_{M}\,\rho_{1}(y)^{1-\frac{1}{n}}\,dm(y) \notag
\end{align*}
that is equal to the right hand side of (\ref{eq:Brunn-Minkowski type inequality}).
This proves the lemma.
\end{proof}

Based on Lemma \ref{lem:Brunn-Minkowski type inequality},
we can prove the implication from \ref{enum:relaxed twisted curv} to \ref{enum:curv cond} in Theorem \ref{thm:displacement convexity} as follows:
\begin{prop}\label{prop:if part}
For $\kappa \in \mathbb{R}$,
if $(M,d,m)$ has $\kappa$-relaxed twisted curvature bound,
then $\ric^{1}_{f} \geq (n-1)\kappa\,e^{\frac{-4f}{n-1}}$.
\end{prop}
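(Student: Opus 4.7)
The plan is to extract the pointwise weighted Ricci bound from the Brunn-Minkowski type inequality of Lemma \ref{lem:Brunn-Minkowski type inequality} by localizing it near a point and carrying out an asymptotic expansion, in the spirit of Sturm \cite{St3}.

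First, I would fix a point $x_0 \in M$ and a unit tangent vector $v \in U_{x_0}M$, and let $\gamma$ be the geodesic with $\gamma(0) = x_0$ and $\gamma'(0) = v$. For small parameters $\epsilon, r > 0$, set $x_\epsilon := \gamma(-\epsilon)$ and $y_\epsilon := \gamma(\epsilon)$ and take small Borel sets $X^r_\epsilon, Y^r_\epsilon$ concentrating at $x_\epsilon$ and $y_\epsilon$, for instance small metric balls of radius $r$. The midpoint set $Z_{1/2}(X^r_\epsilon, Y^r_\epsilon)$ then lies in a shrinking neighborhood of $x_0$, and its $m$-measure is governed by the Jacobian $J_{1/2}$ of the midpoint map along $\gamma$, which is precisely the object studied in Proposition \ref{prop:Jacobian inequality}.

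Next, I would apply Lemma \ref{lem:Brunn-Minkowski type inequality} at $t = 1/2$ to $(X^r_\epsilon, Y^r_\epsilon)$ and Taylor-expand both sides in $\epsilon$, letting $r$ shrink first. The asymptotics of $d_f$ and $d_{f,1/2}$ give $d_f(x_\epsilon, y_\epsilon) = 2\epsilon\, e^{-2f(x_0)/(n-1)} + O(\epsilon^2)$ and $d_{f,1/2}(x_\epsilon, y_\epsilon) = \epsilon\, e^{-2f(x_0)/(n-1)} + O(\epsilon^2)$; substituting into the Jacobi coefficient $s_\kappa$ yields
\begin{equation*}
\beta_{\kappa,f,1/2}(x_\epsilon, y_\epsilon)^{1/n} = 1 - \frac{(n-1)\kappa\, e^{-4f(x_0)/(n-1)}}{2n}\, \epsilon^2 + o(\epsilon^2),
\end{equation*}
and similarly for $\overline{\beta}_{\kappa,f,1/2}$, so that the twist factor $e^{-4f/(n-1)}$ emerges naturally from squaring the twisted speed $e^{-2f/(n-1)}$. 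On the left, expanding $m(Z_{1/2}(X^r_\epsilon, Y^r_\epsilon))^{1/n}$ via Jacobi fields along $\gamma$ produces, at order $\epsilon^2$, exactly the combination involving $\ric_g(v)$, the Hessian $\Hess f(v,v)$ coming from the weight $e^{-f}$ in $m$, and a term $(df(v))^2/(n-1)$ from the twisted parametrization, which add up to $\ric^{1}_{f}(v)$.

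Matching the two sides would yield $\ric^{1}_{f}(v) \geq (n-1)\kappa\, e^{-4f(x_0)/(n-1)}$, and since $x_0$ and $v$ were arbitrary, this is the desired bound. The main obstacle is the algebraic bookkeeping of the order-$\epsilon^2$ expansions, in particular verifying that the weight contributions assemble into $\ric^{1}_{f}$ rather than into a Bakry-\'Emery tensor with a different parameter $N$; this is precisely where the match between the scaling $e^{-2f/(n-1)}$ in $d_{f,t}$ and the $df \otimes df/(n-1)$ term in $\ric^{1}_{f}$ becomes crucial. A cleaner, essentially equivalent route is to fix a single geodesic and test the Brunn-Minkowski inequality on infinitesimal transverse sections, then differentiate twice in $t$ to recover the Riccati inequality of Lemma \ref{lem:Riccati inequality}, thereby reversing the chain of implications leading to Proposition \ref{prop:Jacobian inequality}.
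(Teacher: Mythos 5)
Your plan is essentially the paper's own proof: fix $x_0$ and $v\in U_{x_0}M$, apply the Brunn--Minkowski type inequality of Lemma \ref{lem:Brunn-Minkowski type inequality} at $t=\tfrac12$ to small balls centred at $\gamma(-\delta)$ and $\gamma(\delta)$, let the radius tend to $0$ first, Taylor-expand in the separation parameter, and compare with the classical upper bound
\begin{equation*}
\limsup_{\eta\to 0}\frac{m\left(Z_{\frac12}(X,Y)\right)}{\omega_{n}\,\eta^{n}}\leq e^{-f(x_0)}\left(1+\ric_{g}(v)\,\frac{\delta^{2}}{2}\right)+O(\delta^{3}),
\end{equation*}
which is exactly the route taken in the paper.

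Two details in your expansion need correcting, though neither changes the strategy. First, the displayed formula for $\beta_{\kappa,f,1/2}^{1/n}$ has the wrong sign: for $\kappa>0$ the twisted coefficients are at least $1$, and the correct second-order term is $+\frac{(n-1)\kappa\,e^{-4f(x_0)/(n-1)}}{2n}\,\epsilon^{2}$; with your minus sign the final matching would produce the reversed inequality. Second, the bookkeeping of the weight terms is misplaced: since the midpoint set collapses to the single point $x_0$, the left-hand side expansion carries only $\ric_{g}(v)$ at second order, whereas $\Hess f(v,v)$ and $g((\nabla f)_{x_0},v)^{2}/(1-n)$ arise on the right-hand side, from the second-order expansions of the weight factors $e^{-f(\gamma(\mp\delta))}$ (which enter through $m(X)^{1/n}$, $m(Y)^{1/n}$) together with the first-order terms $\pm g((\nabla f)_{x_0},v)\,\delta$ of $\beta_{\kappa,f,1/2}$ and $\overline{\beta}_{\kappa,f,1/2}$, which you dropped; these first-order terms cancel only in the symmetric combination and their cross terms are precisely what produce the $N=1$ correction $g((\nabla f)_{x_0},v)^{2}/(1-n)$ rather than some other Bakry--\'Emery parameter. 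With these repairs the matching yields $\ric^{1}_{f}(v)\geq (n-1)\kappa\,e^{-4f(x_0)/(n-1)}$ as claimed, in the same way as the paper's argument.
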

\begin{proof}
Fix $x\in M$ and $v\in U_{x}M$.
For $\epsilon>0$,
let $\gamma:(-\epsilon,\epsilon)\to M$ be the geodesic with $\gamma(0)=x,\,\gamma'(0)=v$.
Take $\delta \in (0,\epsilon)$ and $\eta \in (0,\delta)$.
For $y \in M$,
we denote by $B_{\eta}(y)$ the open geodesic ball of radius $\eta$ centered at $y$.
We set $X:=B_{\eta}(\gamma(-\delta))$ and $Y:=B_{\eta}(\gamma(\delta))$,
which are disjoint.
From Lemma \ref{lem:Brunn-Minkowski type inequality}
we deduce
\begin{align*}
m \left(  Z_{\frac{1}{2}}(X,Y)\right)^{\frac{1}{n}} \geq \frac{1}{2}\,\left(\inf_{(x,y)\in X \times Y}\,\beta_{\kappa,f,\frac{1}{2}}(y,x)^{\frac{1}{n}} \right)  \,  &m(X)^{\frac{1}{n}}\\
                                                               +\: \frac{1}{2}\,\left(\inf_{(x,y)\in X \times Y}\,\beta_{\kappa,f,\frac{1}{2}}(x,y)^{\frac{1}{n}}   \right)\,  &m(Y)^{\frac{1}{n}}.
\end{align*}
By letting $\eta \to 0$ in the above inequality,
\begin{align}\label{eq:use of Brunn-Minkowski type inequality}
\liminf_{\eta \to 0}\,\left(\frac{m \left(    Z_{\frac{1}{2}}(X,Y)  \right)}{\omega_{n}\,\eta^{n}}\right)^{\frac{1}{n}} \geq \frac{1}{2}\,&\left(e^{-f(\gamma(-\delta))}\,\beta_{\kappa,f,\frac{1}{2}}(\gamma(\delta),\gamma(-\delta))\right)^{\frac{1}{n}}\\ \notag
                                                                                                                                                                            +\: \frac{1}{2}\,&\left(e^{-f(\gamma(\delta))}\,\beta_{\kappa,f,\frac{1}{2}}(\gamma(-\delta),\gamma(\delta))\right)^{\frac{1}{n}},
\end{align}
where $\omega_{n}$ denotes the volume of the unit ball in $\mathbb{R}^{n}$.

By the definition of the function $d_{f,t}$,
we see
\begin{align*}
d_{f,\frac{1}{2}}(\gamma(\delta),\gamma(-\delta))&=\int^{\delta}_{0}\,e^{\frac{-2f(\gamma(\xi))}{n-1}}\,d\xi,\\
d_{f,\frac{1}{2}}(\gamma(-\delta),\gamma(\delta))&=\int^{0}_{-\delta}\,e^{\frac{-2f(\gamma(\xi))}{n-1}}\,d\xi,\\
d_{f}(\gamma(-\delta),\gamma(\delta))&=\int^{\delta}_{-\delta}\,e^{\frac{-2f(\gamma(\xi))}{n-1}}\,d\xi.
\end{align*}
Hence,
the Taylor series of $\beta_{\kappa,f,\frac{1}{2}}(\gamma(\delta),\gamma(-\delta))$ and $\beta_{\kappa,f,\frac{1}{2}}(\gamma(-\delta),\gamma(\delta))$ with respect to $\delta$ at $0$ are given as
\begin{align*}
&\quad \,\,\beta_{\kappa,f,\frac{1}{2}}(\gamma(\delta),\gamma(-\delta))\\
&=1-g((\nabla f)_{x},v)\, \delta+\left(  (n-1)\,\kappa\,e^{\frac{-4f(x)}{n-1}} +\frac{n-2}{n-1} \,g((\nabla f)_{x},v)^{2} \right) \,\frac{\delta^{2}}{2}+O(\delta^{3}),\\
&\quad \,\,\beta_{\kappa,f,\frac{1}{2}}(\gamma(-\delta),\gamma(\delta))\\
&=1+g((\nabla f)_{x},v)\, \delta +\left( (n-1)\,\kappa\,e^{\frac{-4f(x)}{n-1}}+\frac{n-2}{n-1} \,g((\nabla f)_{x},v)^{2} \right) \,\frac{\delta^{2}}{2}+O(\delta^{3}).
\end{align*}
On the other hand,
\begin{align*}
e^{-f(\gamma(-\delta))+f(x)}&=1+g((\nabla f)_{x},v)\,\delta+\left( g((\nabla f)_{x},v)^{2}-\Hess f(v,v)  \right)\frac{\delta^{2}}{2} +O(\delta^{3}),\\
e^{-f(\gamma(\delta))+f(x)}&=1-g( (\nabla f)_{x},v)\,\delta+\left( g((\nabla f)_{x},v)^{2}-\Hess f (v,v)  \right)\frac{\delta^{2}}{2} +O(\delta^{3}).
\end{align*}
Substituting these series into (\ref{eq:use of Brunn-Minkowski type inequality}),
we have
\begin{align}\label{eq:Taylor series}
&\quad \,\,\liminf_{\eta \to 0}\,\frac{m \left(    Z_{\frac{1}{2}}(X,Y)  \right)}{\omega_{n}\,\eta^{n}}\\ \notag
&\geq e^{-f(x)}+e^{-f(x)}\left((n-1)\,\kappa\,e^{\frac{-4f(x)}{n-1}}   -\Hess f(v,v)+\frac{g((\nabla f)_{x},v)^{2}}{1-n}\,  \right)\, \frac{\delta^{2}}{2}+O(\delta^{3}).
\end{align}

Recall the following inequality (see e.g., the proof of Theorem 1.7 in \cite{St3}, and that of Theorem 1.2 in \cite{O1}):
\begin{equation}\label{eq:Ricci curvature representation}
\limsup_{\eta \to 0}\,\frac{m \left(    Z_{\frac{1}{2}}(X,Y)  \right)}{\omega_{n}\,\eta^{n}} \leq e^{-f(x)}\,\left( 1+\ric_{g}(v)\,\frac{\delta^{2}}{2}\,\right)+O(\delta^{3}).
\end{equation}
Comparing (\ref{eq:Taylor series}) with (\ref{eq:Ricci curvature representation}),
we obtain
\begin{equation*}
\ric_{g}(v)\geq (n-1)\,\kappa\,e^{\frac{-4f(x)}{n-1}}-\Hess f(v,v)+\frac{g((\nabla f)_{x},v)^{2}}{1-n};
\end{equation*}
in particular,
$\ric^{1}_{f}(v) \geq (n-1)\kappa\,e^{\frac{-4f(x)}{n-1}}$.
This completes the proof.
\end{proof}

We are now in a position to conclude Theorem \ref{thm:displacement convexity}.

\begin{proof}[Proof of Theorem \ref{thm:displacement convexity}]
The implication from \ref{enum:curv cond} to \ref{enum:twisted curv} directly follows from Proposition \ref{prop:only if part}.
The one from \ref{enum:twisted curv} to \ref{enum:relaxed twisted curv} is trivial since the $\kappa$-relaxed twisted curvature bound requires that (\ref{eq:lower twisted curvature bound}) holds only for $H\in \mathcal{DC}$.
The one from \ref{enum:relaxed twisted curv} to \ref{enum:curv cond} is a direct consequence of Proposition \ref{prop:if part}.
Thus,
we complete the proof of Theorem \ref{thm:displacement convexity}.
\end{proof}

\subsection{Interpolation inequalities}\label{sec:Interpolation inequalities}
We end this section with the summary of interpolation inequalities under the curvature condition (\ref{eq:Ricci curvature assumption}) motivated by Lemma \ref{lem:Brunn-Minkowski type inequality} in the above subsection.

We begin with the so-called $p$-mean inequality.
Let $t\in (0,1)$ and $a,b \in [0,\infty)$.
For $p \in \mathbb{R}\setminus \{0\}$,
the \textit{$p$-mean} is defined as follows:
\begin{equation*}
\mathcal{M}^{p}_{t}(a,b):=\left((1-t)\,a^{p}+t\,b^{p}\right)^{\frac{1}{p}}
\end{equation*}
if $ab\neq 0$,
and $\mathcal{M}^{p}_{t}(a,b):=0$ if $ab=0$.
As the limits,
it is defined as
\begin{equation*}
\mathcal{M}^{0}_{t}(a,b):=a^{1-t}\,b^{t},\quad \mathcal{M}^{\infty}_{t}(a,b):=\max \{a,b\},\quad  \mathcal{M}^{-\infty}_{t}(a,b):=\min \{a,b\}.
\end{equation*}
We possess the following (cf. Corollary 1.1 in \cite{CMS}, Corollary 9.1 in \cite{O1} and Theorem 19.18 in \cite{V}):
\begin{cor}\label{cor:p mean}
For $i=0,1$,
let $\psi_{i}:M \to \mathbb{R}$ denote non-negative, integrable functions.
Let $X,\,Y \subset M$ denote disjoint, bounded Borel subsets with $\supp \psi_{0} \subset X,\,\supp \psi_{1} \subset Y$.
Let $\psi:M\to \mathbb{R}$ be a non-negative function.
For $t \in (0,1)$ and $p \geq -1/n$,
we assume that
for all $(x,y) \in X \times Y$ and $z \in Z_{t}(\{x\},\{y\})$,
we have
\begin{equation*}
\psi(z) \geq \mathcal{M}^{p}_{t}\left(  \frac{\psi_{0}(x)}{\beta_{\kappa,f,1-t}(y,x)}, \frac{\psi_{1}(y)}{\beta_{\kappa,f,t}(x,y)}\right).
\end{equation*}
For $\kappa \in \mathbb{R}$,
if $\ric^{1}_{f} \geq (n-1)\kappa\,e^{\frac{-4f}{n-1}}$,
then we have
\begin{equation*}
\int_{M} \,\psi\,dm \geq \mathcal{M}^{\frac{p}{1+np}}_{t}\left(  \int_{M} \,\psi_{0}\,dm,\int_{M} \,\psi_{1}\,dm    \right).
\end{equation*}
Here we set $p/(1+np):=-\infty$ for $p=-1/n$.
\end{cor}
Theorem 19.18 in \cite{V} states that
for $K \in \mathbb{R}$ and $N \in [n,\infty)$,
the curvature condition (\ref{eq:constant Ricci curvature assumption}) implies an inequality of Pr\'ekopa-Leindler type.
One can prove Corollary \ref{cor:p mean}
only by replacing the role of Theorem 19.4 in \cite{V}
with that of Proposition \ref{prop:Jacobian inequality} (or rather the inequality (\ref{eq:Jacob inequ density}) in Remark \ref{rem:Jacobian inequality for density}) in the proof.
We omit the proof.

As the special case of $p=0$ in Corollary \ref{cor:p mean},
we obtain the following inequality of Pr\'ekopa-Leindler type (cf. Corollary 1.2 in \cite{CMS} and Corollary 9.2 in \cite{O1}):
\begin{cor}\label{cor:Prekopa-Leindler}
For $i=0,1$,
let $\psi_{i},X,Y,\psi$ be as in Corollary \ref{cor:p mean}.
For $t \in (0,1)$,
we assume that
for all $(x,y) \in X \times Y$ and $z \in Z_{t}(\{x\},\{y\})$,
\begin{equation*}
\psi(z) \geq  \left(  \frac{\psi_{0}(x)}{\beta_{\kappa,f,1-t}(y,x)} \right)^{1-t}\left(  \frac{\psi_{1}(y)}{\beta_{\kappa,f,t}(x,y)}  \right)^{t}.
\end{equation*}
For $\kappa \in \mathbb{R}$,
if $\ric^{1}_{f} \geq (n-1)\kappa\,e^{\frac{-4f}{n-1}}$,
then we have
\begin{equation*}
\int_{M} \,\psi\,dm \geq \left(  \int_{M} \,\psi_{0}\,dm  \right)^{1-t}\left(  \int_{M} \,\psi_{1}\,dm   \right)^{t}.
\end{equation*}
\end{cor}

Letting $p \to -1/n$ in Corollary \ref{cor:p mean} yields the following inequality of Borel-Branscamp-Lieb type (cf. Main Theorem in \cite{CMS} and Theorem 1.1 in \cite{O1}):
\begin{cor}\label{cor:BBL}
For $i=0,1$,
let $\psi_{i},X,Y,\psi$ be as in Corollary \ref{cor:p mean}.
We suppose $\int_{M} \,\psi_{0}\,dm=\int_{M} \,\psi_{1}\,dm=1$.
For $t \in (0,1)$,
we assume that
for all $(x,y) \in X \times Y$ and $z \in Z_{t}(\{x\},\{y\})$,
we have
\begin{equation*}
\frac{1}{\psi(z)^{\frac{1}{n}}} \leq (1-t)\left(  \frac{\beta_{\kappa,f,1-t}(y,x)}{\psi_{0}(x)} \right)^{\frac{1}{n}}+t\left( \frac{\beta_{\kappa,f,t}(x,y)}{\psi_{1}(y)}  \right)^{\frac{1}{n}}.
\end{equation*}
For $\kappa \in \mathbb{R}$,
if $\ric^{1}_{f} \geq (n-1)\kappa\,e^{\frac{-4f}{n-1}}$,
then we have $\int_{M} \,\psi\,dm \geq 1$.
\end{cor}

Corollary \ref{cor:BBL} leads to the following inequality of Brunn-Minkowski type (cf. Corollary 9.3 in \cite{O1} and Theorem 18.5 in \cite{V}):
\begin{cor}\label{cor:Brunn-Minkowski inequality}
Let $X,\,Y \subset M$ denote two disjoint, bounded Borel subsets with $m(X),\,m(Y)\in (0,\infty)$.
For $\kappa \in \mathbb{R}$,
if $\ric^{1}_{f} \geq (n-1)\kappa\,e^{\frac{-4f}{n-1}}$,
then for every $t\in (0,1)$
we have
\begin{align}\label{eq:Brunn-Minkowski inequality}
m \left(  Z_{t}(X,Y)   \right)^{\frac{1}{n}} \geq (1-t)\, &\left(\inf_{(x,y)\in X \times Y}\,\beta_{\kappa,f,1-t}(y,x)^{\frac{1}{n}} \right)  \,  m(X)^{\frac{1}{n}}\\
                                                                        +\: t\, &\left(\inf_{(x,y)\in X \times Y}\,\beta_{\kappa,f,t}(x,y)^{\frac{1}{n}}   \right)\,  m(Y)^{\frac{1}{n}}. \notag
\end{align}
\end{cor}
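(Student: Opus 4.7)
The plan is to derive the Brunn-Minkowski type inequality as a direct consequence of two results already established in the paper: the main characterization Theorem \ref{thm:displacement convexity} and the Brunn-Minkowski type inequality for the $\kappa$-weak twisted curvature bound proved in Lemma \ref{lem:Brunn-Minkowski type inequality}. There are essentially no new estimates to perform.

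First, I would invoke Theorem \ref{thm:displacement convexity} applied to the hypothesis $\ric^{1}_{f,M}\geq (n-1)\kappa\,e^{-4f/(n-1)}$: by the equivalence of statements (1) and (2) there, $(M,d,m)$ satisfies the $\kappa$-twisted curvature bound in the sense of Definition \ref{defi:twisted curvature bound}. Since the class $\mathcal{DC}$ contains the R\'enyi entropy function $H(r):=n\,r(1-r^{-1/n})$, applying the displacement convexity inequality (\ref{eq:lower twisted curvature bound}) with $U=H$ yields exactly the requirement of Definition \ref{defi:weak twisted curvature bound}. Thus $(M,d,m)$ also has the $\kappa$-weak twisted curvature bound. (Alternatively one could directly chain via (1)$\Rightarrow$(3) in Theorem \ref{thm:displacement convexity}.)

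Once the weak twisted curvature bound is in hand, I would simply apply Lemma \ref{lem:Brunn-Minkowski type inequality} to the given bounded Borel sets $X$ and $Y$ with $m(X),m(Y)\in (0,\infty)$. That lemma produces precisely the stated inequality
\[
m(Z_{t}(X,Y))^{\frac{1}{n}}\geq (1-t)\Bigl(\inf_{X\times Y}\overline{\beta}_{\kappa,f,1-t}^{\frac{1}{n}}\Bigr)m(X)^{\frac{1}{n}}+t\Bigl(\inf_{X\times Y}\beta_{\kappa,f,t}^{\frac{1}{n}}\Bigr)m(Y)^{\frac{1}{n}},
\]
for every $t\in (0,1)$. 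No additional obstacle arises; the entire content of the corollary is that we have packaged the curvature hypothesis into a geometric inequality, and all the analytic work (optimal transport interpolation, Jensen's inequality, estimating $\pi$ by $\inf_{X\times Y}$) was already carried out in the proof of Lemma \ref{lem:Brunn-Minkowski type inequality}.
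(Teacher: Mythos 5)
Your proposal is correct and follows exactly the paper's route: the corollary is stated there as an immediate consequence of Theorem \ref{thm:displacement convexity} (the implication from the curvature bound to the (weak) twisted curvature bound) combined with Lemma \ref{lem:Brunn-Minkowski type inequality}. Nothing further is needed.
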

We can prove (\ref{eq:Brunn-Minkowski inequality}) by applying Corollary \ref{cor:BBL} to the functions
\begin{equation*}
\psi_{0}:=\frac{1_{X}}{m(X)},\quad \psi_{1}:=\frac{1_{Y}}{m(Y)},\quad \psi:=c^{-n}\, 1_{Z_{t}(X,Y)},
\end{equation*}
where $c$ denotes the right hand side of (\ref{eq:Brunn-Minkowski inequality}).
One can also derive (\ref{eq:Brunn-Minkowski inequality}) from Theorem \ref{thm:displacement convexity} and Lemma \ref{lem:Brunn-Minkowski type inequality}.

\section{Functional inequalities}\label{sec:Applications}
This last section is devoted to the discussion on functional inequalities under the curvature condition (\ref{eq:Ricci curvature assumption}).
For $\kappa \in \mathbb{R}$,
let $\mathfrak{c}_{\kappa}:=\mathfrak{s}'_{\kappa}$.
For $x,y\in M$ with $x\neq y$,
we define $\mathsf{b}_{\kappa,f}(x,y)$ and $\mathfrak{b}_{\kappa,f}(x,y)$ as follows:
\begin{equation*}\label{eq:first asymptotic coefficient}
\mathsf{b}_{\kappa,f}(x,y):=\left(  \frac{e^{\frac{-2f(x)}{n-1}}\,d(x,y)}{\mathfrak{s}_{\kappa}(d_{f}(x,y))} \right)^{n-1},\quad \mathfrak{b}_{\kappa,f}(x,y):=\frac{n-1}{n} \left(\frac{e^{\frac{-2f(x)}{n-1}}\,d(x,y)\,\mathfrak{c}_{\kappa}(d_{f}(x,y))}{\mathfrak{s}_{\kappa}(d_{f}(x,y))}-1 \right)
\end{equation*}
if $d_{f}(x,y)\in (0,C_{\kappa})$;
otherwise,
$\mathsf{b}_{\kappa,f}(x,y):=\infty$ and $\mathfrak{b}_{\kappa,f}(x,y):=\infty$.

\begin{rem}
In the unweighted case of $f=0$,
similarly to the twisted coefficient $\beta_{\kappa,f,t}(x,y)$,
one can define $\mathsf{b}_{\kappa,f}(x,y)$ and $\mathfrak{b}_{\kappa,f}(x,y)$ for $x=y$ as the limits $1$ and $0$,
respectively (cf. Remark \ref{rem:disjoint}).
\end{rem}

We check the following basic properties of $\mathsf{b}_{\kappa,f}(x,y)$ and $\mathfrak{b}_{\kappa,f}(x,y)$:
\begin{lem}\label{eq:asymptotic behavior}
Let $\kappa \in \mathbb{R}$.
We take $x,y \in M$ with $x\neq y$ and $d_{f}(x,y)\in (0,C_{\kappa})$.
We assume $y\notin \cut x$.
Then by letting $t\to 0$ we have
\begin{align}\label{eq:first asymptotic behavior}
\beta_{\kappa,f,t}(x,y) &\to \mathsf{b}_{\kappa,f}(x,y),\\ \label{eq:second asymptotic behavior}
\frac{1-\beta_{\kappa,f,1-t}(y,x)^{\frac{1}{n}}}{t}&\to \mathfrak{b}_{\kappa,f}(x,y).
\end{align}
\end{lem}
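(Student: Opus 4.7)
The plan is to reduce both limits to direct asymptotic computations by exploiting the hypothesis $y \notin \cut x$. Since the cut locus is avoided, there is a unique unit speed minimal geodesic $\gamma:[0,d(x,y)]\to M$ from $x$ to $y$, so the infimum in \eqref{eq:weighted distance function} is attained at $\gamma$ and we can write
\begin{equation*}
d_{f,t}(x,y) = \int_{0}^{t\,d(x,y)} e^{\frac{-2f(\gamma(\xi))}{n-1}}\,d\xi.
\end{equation*}
Reversing $\gamma$ (noting uniqueness forces the geodesic from $y$ to $x$ to be $\xi\mapsto \gamma(d(x,y)-\xi)$) and changing variables, I would verify the symmetry-type identity $d_{f,1-t}(y,x) = d_{f}(x,y) - d_{f,t}(x,y)$, together with $d_{f}(y,x)=d_{f}(x,y)$. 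This identifies both one-parameter quantities with smooth functions of $t$ on a neighborhood of $0$.

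For \eqref{eq:first asymptotic behavior}, the fundamental theorem of calculus gives
\begin{equation*}
\lim_{t\to 0}\frac{d_{f,t}(x,y)}{t} = d(x,y)\,e^{\frac{-2f(x)}{n-1}},
\end{equation*}
and since $s_{\kappa}(0)=0$, $s'_{\kappa}(0)=1$ we have $s_{\kappa}(d_{f,t}(x,y))/d_{f,t}(x,y)\to 1$. Multiplying, the ratio $s_{\kappa}(d_{f,t}(x,y))/t$ tends to $d(x,y)\,e^{-2f(x)/(n-1)}$, and raising to the $(n-1)$-th power (and dividing by $s_{\kappa}(d_{f}(x,y))^{n-1}$) yields precisely $\widehat{\beta}_{\kappa,f}(x,y)$.

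For \eqref{eq:second asymptotic behavior}, I would set $u(t):=d_{f,1-t}(y,x)$, so by the identity above $u(0)=d_{f}(x,y)$ and $u'(0)=-d(x,y)\,e^{-2f(x)/(n-1)}$. Writing
\begin{equation*}
\overline{\beta}_{\kappa,f,1-t}(x,y)^{\frac{1}{n}} = \left(\frac{s_{\kappa}(u(t))}{(1-t)\,s_{\kappa}(d_{f}(x,y))}\right)^{\frac{n-1}{n}},
\end{equation*}
this is a smooth function of $t$ near $0$ taking the value $1$ at $t=0$. A logarithmic differentiation yields
\begin{equation*}
\frac{d}{dt}\Big|_{t=0}\overline{\beta}_{\kappa,f,1-t}(x,y)^{\frac{1}{n}} = \frac{n-1}{n}\left(1 - \frac{e^{\frac{-2f(x)}{n-1}}\,d(x,y)\,c_{\kappa}(d_{f}(x,y))}{s_{\kappa}(d_{f}(x,y))}\right),
\end{equation*}
and negating gives exactly $\widetilde{\beta}_{\kappa,f}(x,y)$.

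The computation is essentially routine; the only nontrivial input is the decomposition $d_{f,1-t}(y,x) = d_{f}(x,y) - d_{f,t}(x,y)$, for which the uniqueness of the minimizing geodesic (guaranteed by $y\notin \cut x$) is essential, since otherwise $d_{f,s}$ is only defined via an infimum and the value at $s=1-t$ could arise from a different competitor than the one giving $d_{f}(x,y)$. Once that identity is in hand, everything reduces to elementary calculus with $s_{\kappa}$ and $c_{\kappa}=s'_{\kappa}$.
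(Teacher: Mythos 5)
Your proof is correct and follows essentially the same route as the paper: the hypothesis $y\notin\cut x$ gives a unique minimal geodesic along which $d_{f,t}$ becomes an explicit integral, and both limits then reduce to first-order calculus with $s_{\kappa}$ and $c_{\kappa}$. The only cosmetic difference is that the paper computes the second limit as the derivative at $t=1$ of $G_{f}(t)=\overline{\beta}_{\kappa,f,t}(x,y)^{1/n}$ using the reversed geodesic from $y$ to $x$, which is the same computation as your derivative at $t=0$ after the decomposition identity $d_{f,1-t}(y,x)=d_{f}(x,y)-d_{f,t}(x,y)$.
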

\begin{proof}
First,
we show (\ref{eq:first asymptotic behavior}).
Since the point $y$ does not belong to $\cut x$,
there exists a unique minimal geodesic $\gamma:[0,1]\to M$ from $x$ to $y$.
We define a function $s_{f}:[0,1]\to \mathbb{R}$ by
\begin{equation*}
s_{f}(t):=\int^{t}_{0}\,e^{\frac{-2f(\gamma(\xi))}{n-1}}\,d\xi.
\end{equation*}
The uniqueness of $\gamma$ tells us that
for every $t\in [0,1]$
we have $d_{f,t}(x,y)=s_{f}(t)\,d(x,y)$.
This implies
\begin{equation*}
\frac{\mathfrak{s}_{\kappa}(d_{f,t}(x,y))}{t\,\mathfrak{s}_{\kappa}(d_{f}(x,y))} \to \frac{s'_{f}(0)\,d(x,y)}{\mathfrak{s}_{\kappa}(d_{f}(x,y))}=\frac{e^{\frac{-2f(x)}{n-1}}\,d(x,y)}{\mathfrak{s}_{\kappa}(d_{f}(x,y))}
\end{equation*}
as $t\to 0$.
We obtain (\ref{eq:first asymptotic behavior}).

We next show (\ref{eq:second asymptotic behavior}).
For a unique minimal geodesic $\overline{\gamma}:[0,1]\to M$ from $y$ to $x$,
we define a function $\overline{s}_{f}:[0,1]\to \mathbb{R}$ as
\begin{equation*}
\overline{s}_{f}(t):=\int^{t}_{0}\,e^{\frac{-2f(\overline{\gamma}(\xi))}{n-1}}\,d\xi.
\end{equation*}
The uniqueness of $\overline{\gamma}$ implies that
$d_{f,t}(y,x)=\overline{s}_{f}(t)\,d(y,x)$ for every $t\in [0,1]$.
Define $G_{f}:(0,1]\to \mathbb{R}$ by
\begin{equation*}
G_{f}(t):=\beta_{\kappa,f,t}(y,x)^{\frac{1}{n}}=\left(\frac{\mathfrak{s}_{\kappa}(\overline{s}_{f}(t)\,d(y,x))}{t\,\mathfrak{s}_{\kappa}(d_{f}(y,x))}\right)^{1-\frac{1}{n}},
\end{equation*}
here we define $G_f(1)$ as the limit $1$.
From direct computations
we deduce
\begin{equation*}
G'_{f}(1)=\frac{n-1}{n} \left(\frac{\overline{s}'_{f}(1)\,d(x,y)\,\mathfrak{c}_{\kappa}(d_{f}(x,y))}{\mathfrak{s}_{\kappa}(d_{f}(x,y))}-1 \right)=\mathfrak{b}_{\kappa,f}(x,y).
\end{equation*}
This proves (\ref{eq:second asymptotic behavior}).
\end{proof}

For a non-negative Lipschitz function $\rho$ on $M$ with $\int_{M}\,\rho\,dm=1$,
set $\mu:=\rho \,m$.
The \textit{generalized Fisher information $I_{m}(\mu)$ of $\mu$} is defined as
\begin{equation*}
I_{m}(\mu):=\int_{M}\,\frac{ \bigl\Vert   \nabla \rho^{1-\frac{1}{n}} \bigl \Vert^{2}}{\rho}\,dm.
\end{equation*}

Recall the following fact concerning the derivative of the R\'enyi entropy $H_{m}$ (see e.g., Theorem 20.1 and (20.8) in \cite{V}):
\begin{prop}\label{prop:lower differential of energy}
For $i=0,1$,
let $\rho_{i}:M\to \mathbb{R}$ be non-negative Lipschitz functions with $\int_{M}\,\rho_{i} \,dm=1$.
We assume that
$\mu:=\rho_{0} \,m$ and $\nu:=\rho_{1}\,m$ belong to $P^{ac}_2(M)$.
Then for a unique minimal geodesic $(\mu_{t})_{t \in [0,1]}$ in $(P_{2}(M),W_{2})$ from $\mu$ to $\nu$,
we have
\begin{equation}\label{eq:lower differential of energy}
\liminf_{t\to 0}\,\frac{H_{m}(\mu_{t})-H_{m}(\mu)}{t}\geq -\sqrt{I_{m}(\mu)}\:W_{2}(\mu,\nu).
\end{equation}
\end{prop}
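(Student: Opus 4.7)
The plan is to identify the right derivative of $H_{m}(\mu_{t})$ at $t=0$ via an optimal-transport change of variables and then reduce the desired bound to Cauchy--Schwarz, exactly in the spirit of Villani's Theorem 20.1.

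First, by Theorem \ref{thm:McCann theorem} there is a $c$-concave $\phi$ such that $F(x):=\exp_{x}(-\nabla\phi(x))$ is the unique optimal transport map from $\mu$ to $\nu$, and by Proposition \ref{lem:absolutely continuous} the interpolation $F_{t}(x):=\exp_{x}(-t\nabla\phi(x))$, $\mu_{t}:=(F_{t})_{\#}\mu$, is the unique geodesic in $(P_{2}(M),W_{2})$. Theorem \ref{thm:Jacobian equation} gives, for $\mu$-a.e.\ $x$, $\rho_{0}(x)=\rho_{t}(F_{t}(x))J_{t}(x)$ with $J_{t}(x)=e^{-f(F_{t}(x))+f(x)}\det(dF_{t})_{x}$. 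Since $H(r)=nr-nr^{1-1/n}$, changing variables along $F_{t}$ and using $\mu_{t}\in P_{2}(M)$, $\int\rho_{t}\,dm=1$ yields
\begin{equation*}
H_{m}(\mu_{t})=n-n\int_{M}\rho_{0}(x)^{1-\frac{1}{n}}J_{t}(x)^{\frac{1}{n}}\,dm(x),
\end{equation*}
so that
\begin{equation*}
\frac{H_{m}(\mu_{t})-H_{m}(\mu)}{t}=-n\int_{M}\rho_{0}^{1-\frac{1}{n}}\,\frac{J_{t}^{1/n}-1}{t}\,dm.
\end{equation*}

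Next, for $\mu$-a.e.\ $x$ the function $\phi$ is twice Alexandrov-differentiable at $x$; a direct expansion $F_{t}(x)=x-t\nabla\phi(x)+O(t^{2})$ together with the definition of $J_{t}$ gives the pointwise limit
\begin{equation*}
\lim_{t\to 0^{+}}\frac{J_{t}(x)^{1/n}-1}{t}=-\frac{1}{n}\bigl(\Delta\phi(x)-g\bigl((\nabla f)_{x},\nabla\phi(x)\bigr)\bigr),
\end{equation*}
the weighted Laplacian applied to $\phi$. The classical concavity of the unweighted $t\mapsto\det(dF_{t})_{x}^{1/n}$ along geodesics (standard Jacobi-field analysis, no curvature assumption needed) combined with the smoothness of $e^{-f}$ on the compact support shows that $(J_{t}^{1/n}-1)/t$ is monotone in $t$ on $(0,1]$, so monotone convergence (possibly in the extended sense) delivers
\begin{equation*}
\liminf_{t\to 0^{+}}\frac{H_{m}(\mu_{t})-H_{m}(\mu)}{t}\geq\int_{M}\rho_{0}^{1-\frac{1}{n}}\bigl(\Delta\phi-g((\nabla f),\nabla\phi)\bigr)\,dm.
\end{equation*}

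Since $M$ is compact and $\rho_{0}$ is positive and Lipschitz, $\rho_{0}^{1-1/n}$ is Lipschitz, and integration by parts against $dm=e^{-f}\,d\vol$ yields
\begin{equation*}
\int_{M}\rho_{0}^{1-\frac{1}{n}}\bigl(\Delta\phi-g((\nabla f),\nabla\phi)\bigr)\,dm=-\int_{M}g\bigl(\nabla\rho_{0}^{1-\frac{1}{n}},\nabla\phi\bigr)\,dm.
\end{equation*}
Applying Cauchy--Schwarz with the split $g(\nabla\rho_{0}^{1-1/n},\nabla\phi)=(\rho_{0}^{-1/2}\Vert\nabla\rho_{0}^{1-1/n}\Vert)(\rho_{0}^{1/2}\Vert\nabla\phi\Vert)$ and using that $F=\exp(-\nabla\phi)$ is the Brenier map, so $\int_{M}\Vert\nabla\phi\Vert^{2}\,d\mu=\int_{M}d(x,F(x))^{2}\,d\mu(x)=W_{2}(\mu,\nu)^{2}$, we obtain
\begin{equation*}
\Bigl|\int_{M}g\bigl(\nabla\rho_{0}^{1-\frac{1}{n}},\nabla\phi\bigr)\,dm\Bigr|\leq\sqrt{I_{m}(\mu)}\,W_{2}(\mu,\nu),
\end{equation*}
which combined with the previous displays gives (\ref{eq:lower differential of energy}).

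The main obstacle is the rigor of the limit-and-integration-by-parts step: the $c$-concave $\phi$ is only semiconcave, so $\Delta\phi$ exists merely as a signed Radon measure, and its singular part could fail to coincide with the $\mu$-a.e.\ Alexandrov Laplacian. The resolution is that the singular part is nonpositive (by semiconcavity), so passing from the pointwise $\mu$-a.e.\ limit to the distributional pairing only strengthens the lower bound -- exactly the direction needed for $\liminf$. Alternatively, one may mollify $\phi$ to $\phi_{\varepsilon}$, perform the computation smoothly, and let $\varepsilon\to 0$ using the uniform Lipschitz bound on $\nabla\phi$; the positivity and Lipschitz regularity of $\rho_{0}$ on compact $M$ are precisely what make all the limiting arguments uniform.
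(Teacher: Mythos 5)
Your overall route is the standard one behind the result the paper cites (Theorem 20.1 in Villani; the paper itself gives no proof): rewrite $H_{m}(\mu_{t})=n-n\int_{M}\rho_{0}^{1-\frac{1}{n}}J_{t}^{\frac{1}{n}}\,dm$ via the Jacobian equation, differentiate pointwise at $t=0$ using the Alexandrov Hessian of $\phi$, integrate by parts against $dm=e^{-f}\,d\vol$, and finish with Cauchy--Schwarz together with $\Vert\nabla\phi(x)\Vert=d(x,F(x))$ and the optimality of $F$. Your treatment of the distributional Laplacian is also in the right direction: since the test function $\rho_{0}^{1-\frac{1}{n}}e^{-f}$ is nonnegative and the singular part of $\Delta\phi$ is nonpositive by semiconcavity, replacing the a.e.\ Alexandrov Laplacian by the distributional one only improves the lower bound. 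The genuine gap is the justification of the limit/integral interchange. The claim that $t\mapsto\det(dF_{t})_{x}^{1/n}$ is concave ``with no curvature assumption needed'' is false on a Riemannian manifold: the Jacobi-field computation (Lemma \ref{lem:unweighted Riccati inequality}) gives only $D_{x}''(t)\leq-\tfrac{1}{n}\ric_{g}(\gamma_{x}'(t))\,D_{x}(t)$ for $D_{x}(t)=\det(dF_{t})_{x}^{1/n}$, so concavity is exactly the case $\ric_{g}\geq 0$ along the transport geodesics; it is a Euclidean fact, not a curvature-free one. Moreover, even granting concavity of the unweighted determinant, multiplying by $e^{-f(F_{t}(x))+f(x)}$ and taking the $n$-th root does not preserve concavity or the monotonicity of difference quotients, so the asserted monotonicity of $(J_{t}^{1/n}-1)/t$, and with it the appeal to monotone convergence, is unjustified as written. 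Since the proposition carries no curvature hypothesis, you cannot repair this by invoking the standing assumption $\ric^{1}_{f,M}\geq(n-1)\kappa e^{-4f/(n-1)}$ either; and a crude dominated-convergence argument fails because $\det(dF_{t})_{x}$ admits no $x$-uniform upper bound for fixed $t$ (the Alexandrov Hessian of a $c$-concave function is bounded above but not below).

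The step can be repaired using only the compactness of $M$ assumed in Section \ref{sec:Applications}. There $\ric_{g}\geq-(n-1)K g$ for some $K\geq0$, the transport speeds satisfy $\Vert\gamma_{x}'\Vert\leq\diam M$, and $f$ is smooth, so $D_{x}''\leq\Lambda D_{x}$ on $[0,1]$ with a constant $\Lambda$ independent of $x$. A Wronskian comparison (the function $D_{x}'(t)\cosh(\sqrt{\Lambda}t)-\sqrt{\Lambda}D_{x}(t)\sinh(\sqrt{\Lambda}t)$ is non-increasing) yields
\begin{equation*}
D_{x}(t)\leq\cosh(\sqrt{\Lambda}t)+D_{x}'(0^{+})\,\frac{\sinh(\sqrt{\Lambda}t)}{\sqrt{\Lambda}},
\end{equation*}
and, after absorbing the uniformly Lipschitz-in-$t$ factor $e^{(-f(F_{t}(x))+f(x))/n}$, this gives
\begin{equation*}
\frac{J_{t}(x)^{1/n}-1}{t}\leq C\left(1+\bigl(\Delta\phi(x)\bigr)^{-}\right)\qquad\text{for all small }t,
\end{equation*}
with $C$ independent of $x$ and $t$, where $\Delta\phi$ denotes the a.e.\ Alexandrov Laplacian. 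The majorant is $m$-integrable: semiconcavity gives $\Delta\phi\leq C'$ a.e., while the distributional Laplacian has zero total mass on the closed manifold and nonpositive singular part, so $\int_{M}(\Delta\phi)^{-}\,d\vol\leq\int_{M}(\Delta\phi)^{+}\,d\vol<\infty$. With this $t$-uniform integrable upper bound, the reverse Fatou lemma replaces your monotone-convergence step, and the remainder of your argument (integration by parts with the favorable sign of the singular part, then Cauchy--Schwarz and $\int_{M}\Vert\nabla\phi\Vert^{2}\,d\mu=W_{2}(\mu,\nu)^{2}$) goes through and yields (\ref{eq:lower differential of energy}).
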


Using Theorem \ref{thm:displacement convexity} and Proposition \ref{prop:lower differential of energy},
we prove the following:
\begin{prop}\label{prop:pre HWI inequality}
For $i=0,1$,
let $\rho_{i}:M\to \mathbb{R}$ be non-negative Lipschitz functions with $\int_{M}\,\rho_{i} \,dm=1$.
We assume that
$\mu:=\rho_{0} \,m$ and $\nu:=\rho_{1}\,m$ are disjointly supported,
and belong to $P^{ac}_2(M)$.
For $\kappa \in \mathbb{R}$,
if $\ric^{1}_{f} \geq (n-1)\kappa\,e^{\frac{-4f}{n-1}}$,
then we have
\begin{align}\label{eq:pre HWI}
H_{m}(\mu) \leq \sqrt{I_{m}(\mu)} W_{2}(\mu,\nu)&+n\, \int_{M\times M}\,\rho_{0}(x)^{-\frac{1}{n}}\, \mathfrak{b}_{\kappa,f}(x,y)  \,d\pi(x,y)\\ \notag
                   & -    n\, \int_{M\times M}\,\rho_{1}(y)^{-\frac{1}{n}}\,\left(\mathsf{b}_{\kappa,f}(x,y)^{\frac{1}{n}}-1\right)\,d\pi(x,y)\\ \notag
                   &-    n\, \int_{M\times M}\,\left(\rho_{1}(y)^{-\frac{1}{n}}-1\right)\,d\pi(x,y),
\end{align}
where $\pi$ is a unique optimal coupling of $(\mu,\nu)$.
\end{prop}
\begin{proof}
By Theorem \ref{thm:displacement convexity},
$(M,d,m)$ has $\kappa$-relaxed twisted curvature bound.
It follows that
\begin{align*}
H_{m}(\mu_{t})\leq n-(1-t)\,n\,&\int_{M\times M}\,\rho_{0}(x)^{-\frac{1}{n}}\,\beta_{\kappa,f,1-t}(y,x)^{\frac{1}{n}}\,d\pi(x,y)\\
                                -\:t\,n\,&\int_{M\times M}\,\rho_{1}(y)^{-\frac{1}{n}}\,\beta_{\kappa,f,t}(x,y)^{\frac{1}{n}}\,d\pi(x,y),
\end{align*}
where $(\mu_{t})_{t\in [0,1]}$ is a unique minimal geodesic in $(P_{2}(M),W_{2})$ from $\mu$ to $\nu$.
This leads to
\begin{align*}
\frac{H_{m}(\mu_{t})-H_{m}(\mu)}{t} &\leq n\, \int_{M\times M}\,\rho_{0}(x)^{-\frac{1}{n}}\,   \frac{1-\beta_{\kappa,f,1-t}(y,x)^{\frac{1}{n}}}{t} \,d\pi(x,y)\\
                                                         &   +   n\, \int_{M\times M}\,\rho_{0}(x)^{-\frac{1}{n}}\,  \left(\beta_{\kappa,f,1-t}(y,x)^{\frac{1}{n}}-1\right)\,d\pi(x,y)\\
                                                         &   -    n\, \int_{M\times M}\,\rho_{1}(y)^{-\frac{1}{n}}\,\left(\beta_{\kappa,f,t}(x,y)^{\frac{1}{n}}-1\right)\,d\pi(x,y)\\
                                                         &   -    n\, \int_{M\times M}\,\left(\rho_{1}(y)^{-\frac{1}{n}}-1\right)\,d\pi(x,y)-H_{m}(\mu).
\end{align*}
Let $F_{1}$ be the map defined as (\ref{eq:interpolation map}),
which determines the unique optimal coupling of $(\mu,\nu)$.
Remark that
for $\mu$-almost every $x \in M$
we have $F_{1}(x)\notin \cut x$ by Theorem \ref{thm:Jacobian equation};
in particular,
Theorem \ref{thm:cut value estimate} implies $d_{f}(x,F_{1}(x)) \in (0,C_{\kappa})$.
By using $\pi =(F_{0}\times F_{1})_{\#}\mu$ and Lemma \ref{eq:asymptotic behavior},
\begin{align*}\label{eq:upper differential of energy}
\limsup_{t\to 0}\frac{H_{m}(\mu_{t})-H_{m}(\mu)}{t} &\leq n \int_{M\times M}\rho_{0}(x)^{-\frac{1}{n}} \mathfrak{b}_{\kappa,f}(x,y)\, d\pi(x,y)\\ \notag
                                                                                     &  -    n \int_{M\times M} \rho_{1}(y)^{-\frac{1}{n}}\left(\mathsf{b}_{\kappa,f}(x,y)^{\frac{1}{n}}-1\right) d\pi(x,y)\\ \notag
                                                                                     &  -    n \int_{M\times M}\left(\rho_{1}(y)^{-\frac{1}{n}}-1\right)d\pi(x,y)-H_{m}(\mu).
\end{align*}
Comparing this inequality with (\ref{eq:lower differential of energy}),
we arrive at the desired one.
\end{proof}

\begin{rem}
Under the curvature condition (\ref{eq:constant Ricci curvature assumption}) for $K\in \mathbb{R}$ and $N\in [n,\infty)$,
it is well-known that
a similar inequality to (\ref{eq:pre HWI}) holds without the disjointness of $\mu$ and $\nu$.
Moreover,
under the setting of $m\in P_2(M)$,
letting $\rho_{0}=1$ or $\rho_1=1$ leads to several functional inequalities such as the HWI inequality, the Logarithmic Sobolev inequality, and the transport energy inequality (cf. Theorems 20.10, 21.7, 22.37, and Corollary 22.39 in \cite{V}).
In our case,
even if $m\in P_2(M)$,
we can not take $\rho_{0}=1$ or $\rho_1=1$ because of the disjointness of $\mu$ and $\nu$.
Thus,
it seems to be difficult to obtain such functional inequalities under the curvature condition (\ref{eq:Ricci curvature assumption}).
\end{rem}

\subsection*{{\rm Acknowledgements}}
The author would like to thank Professor Shin-ichi Ohta for his useful comments.
He is also grateful to the anonymous referee for careful reading and valuable suggestions.
The author gratefully acknowledges support by the European Union through the ERC-AdG ``RicciBounds" for Professor Karl-Theodor Sturm.
He is also supported in part by JSPS Grant-in-Aid for Scientific Research on Innovative Areas ``Discrete Geometric Analysis for Materials Design" (17H06460).


\end{document}